\providecommand{\U}[1]{\protect\rule{.1in}{.1in}}
\newtheorem{theorem}{Theorem}[section]
\newtheorem{example}[theorem]{Example}
\newtheorem{lemma}[theorem]{Lemma}
\newtheorem{proposition}[theorem]{Proposition}
\newenvironment{proof}[1][Proof]{\noindent\textbf{#1.} }{\rule{0.5em}{0.5em}}
\begin{document}

\title{On the spectrum of the hierarchical Schr\"{o}dinger operator }
\author{Alexander Bendikov\thanks{A. Bendikov was supported by the Polish National
Science Center, Grant 2015/17/B/ST 1/00062}
\and Alexander Grigor'yan\thanks{A. Grigor'yan was supported by SFB 1283 of the
German Research Council.}
\and Stanislav Molchanov}
\maketitle

\begin{abstract}
The goal of this paper is the spectral analysis of the Schr\"{o}dinger
operator $H=L+V$ , the perturbation of the Taibleson-Vladimirov multiplier
$L=\mathcal{D}^{\alpha}$ by a potential $V$. Assuming that $V$ belonges to a
class of fast decreasing potentials we show that the discrete part of the
spectrum of $H$ may contain negative energies, it also appears in the spectral
gaps of $L$. We will split the spectrum of $H$ in two parts: high energy part
containing eigenvalues which correspond to the eigenfunctions located on the
support of the potential $V,$ and low energy part which lies in the spectrum
of certain bounded Schr\"{o}dinger operator acting on the Dyson hierarchical
lattice. The spectral asymptotics \ strictly depend on the transience versus
recurrence properties of the underlying hierarchical random walk. In the
transient case we will prove results in spirit of CLR theory, for the
recurrent case we will provide Bargmann's type asymptotics.

\end{abstract}
\tableofcontents

\section{Introduction}

\setcounter{equation}{0}

The spectral theory of nested fractals similar to the Sierpinski gasket, i.e.
the spectral theory of the corresponding Laplacians, is well understood. It
has several important features: Cantor-like structure of the essential
spectrum and, as result, the large number of spectral gaps, presence of
infinite number of eigenvalues each of which has infinite multiplicity and
compactly supported eigenstates, non-regularly varying at infinity heat
kernels which contain oscilated in $\log t$ scale terms etc, see
\cite{GrabnerWoess}, \cite{DerfelGrabner} and \cite{BCW}.

The spectral properties mentioned above occure in the very precise form for
the Taibleson-Vladimirov Laplacian $\mathcal{D}^{\alpha}$, the operator of
fractional derivative of order $\alpha$. This operator can be introduced in
several different forms (say, as $L^{2}$-multiplier in the $p$-adic analysis
setting, see \cite{Vladimirov}) but we select the geometric approach
\cite{Dyson1}, \cite{Molchanov}, \cite{Molchanov1}, \cite{BGP}, \cite{BGPW},
\cite{BendikovKrupski} and \cite{BGMS}.

\paragraph{The Dyson's hierarchical model}

Let us fix an integer $p\geq2$ and consider the family $\{\Pi_{r}%
:r\in\mathbb{Z}\}$ of partitions of $X=\mathbb{[}0,+\infty\lbrack$ such that
each $\Pi_{r}$ consists of all $p$-adic intervals $[kp^{r},(k+1)p^{r}[$. We
call $r$ the rank of the partition $\Pi_{r}$\ (respectively, the rank of the
interval $I\in\Pi_{r}$). Each interval of rank $r$ is the union of $p$
disjoint intervals of rank $(r-1)$. Each point $x\in X$ \ belongs to a certain
interval $I_{r}(x)$ of rank $r$, and intersection of all $p$-adic intervals
$I_{r}(x)$ is $\{x\}.$

\emph{The hierarchical distance} $d(x,y)$ is defined as follows:%
\[
d(x,y)=p^{\mathfrak{n}(x,y)}\text{, \ where }\mathfrak{n}(x,y)=\inf\{r:y\in
I_{r}(x)\}.
\]
Since any two points $x$ and $y$ belong to a certain $p$-adic interval,
$d(x,y)<\infty$. Clearly $d(x,y)=0$ if and only if $x=y$, $d(x,y)=d(y,x)$,
and
\[
d(x,y)\leq\max\{d(x,z),d(z,y)\}
\]
\ for arbitrary $x,y$ and $z$ in $X$, i.e. $d(x,y)$ is an \emph{ultrametric}.

The set $X$ equipped with the ultrametric $d(x,y)$ is \emph{complete,
separable} and \emph{proper} metric space. In the metric space $(X,d)$ the set
of all open balls is countable, it coincides with the set of all $p$-adic
intervals. The Borel $\sigma$-algebra $\mathcal{B}(X,d)$ coincides with the
Borel $\sigma$-algebra $\mathcal{B}(X,\mathrm{d})$ corresponding to the
Eucledian distance $\mathrm{d}$.

Following \cite{MolchanovVainberg1}\ we introduce \emph{the hierarchical
Laplacian} $L$ defined pointwise as convexe linear combination of "elementary
Laplacians"%
\begin{equation}
(Lf)(x)=%
{\displaystyle\sum\limits_{r=-\infty}^{+\infty}}
C(r)\left(  f(x)-\frac{1}{m(I_{r}(x))}%
{\displaystyle\int\limits_{I_{r}(x)}}
fdm\right)  ,\text{ \ } \label{HL}%
\end{equation}
where $C(r)=(1-\kappa)\kappa^{r-1}$, and $m$ is the Lebesgues measure. Here
$\kappa\in(0,1)$ is the second parameter of \ the model. Recall that the first
parameter of the model is $p$, the integer which defines the family of
partitions $\{\Pi_{r}\}$. The series in (\ref{HL}) diverges in general but it
is finite and belongs to $L^{2}=L^{2}(X,m)$ for all $f$\ which take constant
values on the $p$-adic intervals of the rank $r$.

The operator $L$ admits a complete system of compactly supported
eigenfunctions. Indeed, let $I$ be a $p$-adic interval of rank $r$, let
$I_{1},I_{2},...,I_{p}$ be its $p$-adic subintervals of rank $r-1$. Let us
consider $p$ functions%
\[
\psi_{I_{i}}=\frac{1_{I_{i}}}{m(I_{i})}-\frac{1_{I}}{m(I)}.
\]
We have $L\psi_{I_{i}}=\kappa^{r-1}\psi_{I_{i}}$. Since $\sum_{i=1}^{p}%
\psi_{I_{i}}=0$ the rank of the system $\{\psi_{I_{i}}:i=1,2,...,p\}$ is
$p-1$. When $I$ runs over the set of all $p$-adic intervals the system of
eigenfunctions $\{\psi_{I_{i}}\}$ is complete in $L^{2}$ whence $L$ is
essentially self-adjoint operator having pure point spectrum
\[
Spec(L)=\{0\}\cup\{\kappa^{r}:r\in\mathbb{Z}\}.
\]
Each eigenvalue $\lambda_{r}=\kappa^{r-1}$ has infinite multiplicity. We will
see below that writing $\kappa=p^{-\alpha}$, i.e. setting $\alpha=\ln\frac
{1}{\kappa}/\ln p$, the operator $L$ coinsides with the Taibleson-Vladimirov
operator $\mathcal{D}^{\alpha}$, the operator of fractional derivative of
order $\alpha$. The constant $s_{h}=2\ln p/\ln\frac{1}{\kappa}$ is
called\emph{ spectral dimension} of the triple $(X,d,L)$. It gives the
on-diagonal asymptotics of the transition density $p(t,x,x)\asymp t^{-s_{h}%
/2}$ of the Markov semigroup $(e^{-tL})_{t>0}$, see \cite[Proposition
2.3]{MolchanovVainberg1} and \cite{BGPW}.

There are already several publications on the spectrum of the hierarchical
Laplacian acting on a general ultrametric measure space $(X,d,m)$
\cite{AlbeverioKarwowski}, \cite{AisenmanMolchanov}, \cite{Molchanov},
\cite{Molchanov1}, \cite{BGP}, \cite{BGPW}, \cite{BendikovKrupski},
\cite{BGMS}. Accordingly, the hierarchical Schr\"{o}dinger operator was
studied in \cite{Dyson2}, \cite{Molchanov}, \cite{MolchanovVainberg1},
\cite{MolchanovVainberg2}, \cite{Bovier}, \cite{Kvitchevski1},
\cite{Kvitchevski2}, \cite{Kvitchevski3} (the hierarchical lattice of Dyson)
and in \cite{Vladimirov94}, \cite{VladimirovVolovich}, \cite{Kochubey2004}
(the field of $p$-adic numbers).

By the general theory developed in \cite{BGP}, \cite{BGPW} and
\cite{BendikovKrupski}, any hierarchical Laplacian $L$ acts in $L^{2}(X,m),$
is essentially self-adjoint operator and can be represented in the form
\begin{equation}
Lf(x)=%
{\displaystyle\int\limits_{X}}
(f(x)-f(y))J(x,y)dm(y)\text{. \ } \label{Spectrum}%
\end{equation}
It has a pure point spectrum, and its Markov semigroup $(e^{-tL})_{t>0}$
admits with respect to $m$ a continuous transition density $p(t,x,y)$. It
turns out that in terms of certain (intrinsically related to $L$) ultrametric
$d_{\ast}$,%
\begin{equation}
\text{\ }J(x,y)=\int\limits_{0}^{1/\emph{d}_{\ast}(x,y)}N(x,\tau)d\tau\text{,}
\label{Jump-kernel}%
\end{equation}%
\begin{equation}
\emph{p}(t,x,y)=t\int\limits_{0}^{1/\emph{d}_{\ast}(x,y)}N(x,\tau)\exp
(-t\tau)d\tau, \label{d*-jump kernel}%
\end{equation}
and%
\[
\emph{p}(t,x,x)=\int\limits_{0}^{\infty}\exp(-t\tau)dN(x,\tau)
\]
where $N(x,\tau)$ is the so called\emph{ spectral function }related to $L$.
The analytic properties of the function $p(t,x,y)$ play essential role in the
study of the Schr\"{o}dinger operator $H=L+V$, see paper
\cite{MolchanovVainberg2}.

\paragraph{Notation}

For two positive functions $f$ and $g$ we write $f\asymp g$ if the ratio $f/g$
is bounded from above and from below by positive constants for a specified
range of variables. We write $f\sim g$ if the ratio $f/g$ tends to identity.

A non-decreasing function $N:\mathbb{R}_{+}\rightarrow\mathbb{R}_{+}$ is
called \emph{doubling }if the inequality
\[
N(2r)\leq CN(r)
\]
holds for all $r>0$ and some $C>1$. The doubling property implies that
\[
\frac{N(R)}{N(r)}\leq C^{\prime}\left(  \frac{R}{r}\right)  ^{\tau}%
\]
for all $R>r>0$ and some constants $\tau,C^{\prime}>0.$

A non-decreasing function $M:\mathbb{R}_{+}\rightarrow\mathbb{R}_{+}$ is
called \emph{reverse doubling }if the inequality%
\[
\frac{M(R)}{M(r)}\geq C^{\prime\prime}\left(  \frac{R}{r}\right)  ^{\nu},
\]
holds for all $R>r>0$ and some constants $C^{\prime\prime},\nu>0.$

\paragraph{Outline}

Let us describe the main body of the paper. In Section 2 we introduce the
notion of homogeneous hierarchical Laplacian $L$ and list its basic properties
such as: the set $Spec(L),$ the spectrum of the operator\emph{ }$L$, is pure
point, all eigenvalues of $L$ have infinite multiplicity and compactly
supported eigenfunctions, the heat kernel $p(t,x,y)$ exists and is a
continuous function having nice asymptotic properties etc.). As a special
example we consider the case $X=\mathbb{Q}_{p},$ the ring $\mathbb{Q}_{p}$ of
$p$-adic numbers, endowed with its standard ultrametric $d(x,y)=\left\vert
x-y\right\vert _{p}$ and the normed Haar measure $m$. The hierarchical
Laplacian $L$ in our example coincides with the Taibleson-Vladimirov operator
$\mathfrak{D}^{\alpha}$, the operator of fractional derivative of order
$\alpha$, see \cite{Vladimirov}, \cite{Vladimirov94}, and \cite{Kochubey2004}.
The most complete sourse for the basic definitions and facts related to the
$p$-adic analysis is \cite{Koblitz} and \cite{Taibleson75}.

In the next sections we consider the Schr\"{o}dinger operator $H=L+V$ with a
continuous descending at infinity potential of the form $V=%
{\displaystyle\sum}
\sigma_{i}1_{B_{i}}$, where $B_{i}$ are balls. The main aim here is to study
the set $Spec(H).$ Since $V(x)\rightarrow0$ as $x\rightarrow\infty$ the set
$Spec(H)$\ is pure point with possibly countaly many limit points - the
eigenvalues of the operator $L$. We splitt the set $Spec(H)$ in two disjoint
parts: the first part is related to $Spec(L)$ and the second part is countably
infinite set $\Xi.$ In the case when $d(B_{i},B_{j}),i\neq j,$ become large
enough we specify the structure of the set $\Xi$. We obtain in certain cases
lower bounds on $Neg(H)$, the number of negative eigenvalues of the operator
$H$ counted with their multiplicity.\ Our lower bounds match the well-known
upper bounds from CLR theory.

\section{Preliminaries}

\setcounter{equation}{0}

\subsection{Homogeneous ultrametric space}

Let $(X,d)$ be a locally compact and separable ultrametric space. Recall that
a metric $d$ is called an \emph{ultrametric} if it satisfies the ultrametric
inequality
\begin{equation}
d(x,y)\leq\max\{d(x,z),d(z,y)\},
\end{equation}
that is stronger than the usual triangle inequality. The basic consequence of
the ultrametric property is that each open ball is a closed set. Moreover,
each point $x$ of a ball $B$ can be regarded as its center, any two balls $A$
and $B$ either do not intersect or one is a subset of another etc. See e.g.
Section 1 in \cite{BendikovKrupski} and references therein. In this paper we
assume that the ultrametric space $(X,d)$ is not compact and that it is
\emph{proper}, i.e. each closed $d$-ball is a compact set.

Let $\mathcal{B}$ be the set of all open balls and $\mathcal{B}(x)\subset
\mathcal{B}$ the set of all balls centred at $x$. Notice that the set
$\mathcal{B}$ is atmost countable whereas $X$ by itself may well be
uncountable, e.g. $X=[0,+\infty\lbrack$ with $\mathcal{B}$ consisting of all
$p$-adic intervals as explained in the introduction.

To any ultrametric space $(X,d)$ one can associate in a standard fashion a
tree $\mathcal{T}.$ The vertices of the tree are metric balls, the boundary
$\partial\mathcal{T}$ can be identified with the one-point compactification
$X\cup\{\varpi\}$ of $X.$ We refere to \cite{BendikovKrupski} for a treatment
of the association between a ultrametric space and the tree of its metric balls.

A ultrametric measure space $(X,d,m)$ is called \emph{homogeneous} if the
group of isometries of $(X,d)$ acts transitively and preserves the measure. In
particular,\ a homogeneous ultrametric measure space is eather discrete or
perfect. In a homogeneous ultrametric measure space any two balls $A$ and $B$
having the same diameter satisfy $m(A)=m(B)$.

\subsection{Homogeneous hierarchical Laplacian}

\ Let $(X,d,m)$ be a homogeneous ultrametric space. Let $C:\mathcal{B}%
\rightarrow(0,\infty)$ be a function satisfying the following two conditions:
$(i)$ $C(A)=C(B)$ for any two balls $A$ and $B$ of the same diameter, and
$(ii)$ for all non-singletone $B\in\mathcal{B}$,%
\begin{equation}
\lambda(B):=\sum\limits_{T\in\mathcal{B}:\text{ }B\subseteq T}C(T)<\infty.
\label{C1 condition}%
\end{equation}
The class of functions $C(B)$ satisfying $(i)$ and $(ii)$ is reach enough,
e.g. one can choose
\[
C(B)=(1/m(B))^{\alpha}-(1/m(B^{\prime}))^{\alpha}%
\]
for any two closest neighboring balls $B\subset B^{\prime}$. In this case
\[
\lambda(B)=(1/m(B))^{\alpha}.
\]
Let $\mathcal{D}$ be the set of all locally constant functions having compact
support. The set $\mathcal{D}$ belongs to Banach spaces $C_{0}(X)$ and
$L^{p}=L^{p}(X,m),$ $1\leq p<\infty,$ and is a dence subset there. Given the
data $(\mathcal{B},C,m)$ we define (pointwise) \emph{the} \emph{homogeneous}
\emph{h}$\emph{ierarchical}$ \emph{Laplacian} $L$ as follows
\begin{equation}
Lf(x):=\sum\limits_{B\in\mathcal{B}(x)}C(B)\left(  f(x)-\frac{1}{m(B)}%
\int\limits_{B}fdm\right)  \text{.} \label{hlaplacian}%
\end{equation}
The operator $(L,\mathcal{D})$ acts in $L^{2},$ is symmetric and admits a
complete system of eigenfunctions%
\begin{equation}
f_{B}=\frac{\mathbf{1}_{B}}{m(B)}-\frac{\mathbf{1}_{B^{\prime}}}{m(B^{\prime
})}, \label{eigenfunction}%
\end{equation}
where the couple $B\subset B^{\prime}$ runs over all nearest neighboring balls
having positive measure. The eigenvalue corresponding to $f_{B}$ is
$\lambda(B^{\prime})$ defined at (\ref{C1 condition}),
\[
Lf_{B}(x)=\lambda(B^{\prime})f_{B}(x).
\]
Since the system of eigenfunctions is complete, we conclude that
$(L,\mathcal{D})$ is essentially self-adjoint operator.

\emph{The intrinsic ultrametric} $d_{\ast}(x,y)$ is defined as follows
\begin{equation}
d_{\ast}(x,y):=\left\{
\begin{array}
[c]{ccc}%
0 & \text{when} & x=y\\
1/\lambda(x\curlywedge y) & \text{when} & x\neq y
\end{array}
\right.  , \label{intrinsic ultrametric}%
\end{equation}
where $x\curlywedge y$ be the minimal ball containing both $x$ and $y$. In
particular, for any ball $B,$%
\begin{equation}
\lambda(B)=\frac{1}{\mathrm{diam}_{\ast}(B)}. \label{intrinsic diameter}%
\end{equation}
\emph{The spectral function} $\tau\rightarrow N(\tau),$ see equation
(\ref{Jump-kernel}), is defined as a left-continuous step-function having
jumps at the points $\lambda(B)$, and%
\[
N(\lambda(B))=1/m(B).
\]
$\emph{The}$ \emph{volume function }$V(r)$ is defined by setting $V(r)=m(B)$
where the ball $B$ has $d_{\ast}$-radius $r$. It is easy to see that
\begin{equation}
N(\tau)=1/V(1/\tau). \label{Spectral function}%
\end{equation}
The Markov semigroup $P_{t}=e^{-tL},t>0,$ admits a density $p(t,x,y)$ w.r.t.
$m$, we call it \emph{the heat kernel.} $p(t,x,y)$ is a continuous function
which can be represented in the form%
\begin{equation}
\emph{p}(t,x,y)=t\int\limits_{0}^{1/\emph{d}_{\ast}(x,y)}N(\tau)\exp
(-t\tau)d\tau. \label{Semigroup}%
\end{equation}
For $\lambda>0$ the resolvent operator $R_{\lambda}=(\lambda+L)^{-1}$ admits a
continuous strictly positive kernel $R(\lambda,x,y)$ with respect to the
measure $m$. The operator $R_{\lambda}$\ is well defined for $\lambda=0,$ i.e.
the Markov semigroup $(P_{t})_{t>0}$ is transient, if and only if for some
(equivalently, for all) $x\in X$ the function $\tau\rightarrow1/V(\tau)$ is
integrable at $\infty$. Its kernel $R(0,x,y)$, called also the Green function,
is of the form%
\begin{equation}
R(0,x,y)=%
{\displaystyle\int\limits_{\emph{d}_{\ast}(x,y)}^{+\infty}}
\frac{d\tau}{V(\tau)}. \label{Green function}%
\end{equation}
Under certain reasonable conditions the equation from above takes the form
\[
R(0,x,y)\asymp\frac{\emph{d}_{\ast}(x,y)}{V(\emph{d}_{\ast}(x,y))}.
\]

\subsection{An example}

Let $\Phi:\mathbb{R}_{+}\rightarrow\mathbb{R}_{+}$ be an increasing
homeomorphism. For any two nearest neighbouring balls $B\subset B^{\prime}$ we
define
\begin{equation}
C(B)=\Phi\left(  1/m(B)\right)  -\Phi\left(  1/m(B^{\prime})\right)  .
\label{An example}%
\end{equation}
Then the following properties hold:

\begin{description}
\item[(i)] $\lambda(B)=\Phi\left(  1/m(B)\right)  $,

\item[(ii)] \ $d_{\ast}(x,y)=1/\Phi\left(  1/m(x\curlywedge y)\right)  $,

\item[(iii)] $V(r)\leq1/\Phi^{-1}(1/r).$ Moreover, $V(r)\asymp1/\Phi
^{-1}(1/r)$ whenever both $\Phi$ and $\Phi^{-1}$ are doubling and
$m(B^{\prime})\leq cm(B)$ for some $c>0$ and all neighboring balls $B\subset
B^{\prime}$. In turn, this yields
\end{description}

\begin{equation}
p(t,x,y)\asymp t\cdot\min\left\{  \frac{1}{t}\Phi^{-1}\left(  \frac{1}%
{t}\right)  ,\frac{1}{m(x\curlywedge y)}\Phi\left(  \frac{1}{m(x\curlywedge
y)}\right)  \right\}  , \label{HK-Ex}%
\end{equation}
and%
\begin{equation}
p(t,x,x)\asymp\Phi^{-1}\left(  \frac{1}{t}\right)  \label{Spectral asympt.}%
\end{equation}
for all $t>0$ and $x,y\in X$.

\subsection{$L^{2}$-multipliers}

As a special case of the general construction consider $X=\mathbb{Q}_{p}$, the
ring of $p$-adic numbers equipped with its standard ultrametric
$d(x,y)=\left\vert x-y\right\vert _{p}$. Notice that the ultrametric spaces
$(\mathbb{Q}_{p},\mathrm{d})$ and $(\mathbb{[}0,\infty\mathbb{)},d)$ with
non-eucledian$\ d,$ as explained in the introduction, are isometric.

Let $m$ be the normed Haar measure on the Abelian group $\mathbb{Q}_{p},$
\ $L^{2}=L^{2}(\mathbb{Q}_{p},m)$ and $\mathcal{F}:f\rightarrow\widehat{f}$
the Fourier transform of function $f\in L^{2}$. It is known, see
\cite{Taibleson75}, \cite{Vladimirov94}, \cite{Kochubey2004}, that
$\mathcal{F}:\mathcal{D}\rightarrow\mathcal{D}$ is a bijection.

Let $\Phi:\mathbb{R}_{+}\rightarrow\mathbb{R}_{+}$ be an increasing
homeomorphism. The self-adjoint operator $\Phi(\mathfrak{D)}$ we define as
$L^{2}-$multiplier, that is,
\[
\widehat{\Phi(\mathfrak{D)}f}(\xi)=\Phi(\left\vert \xi\right\vert
_{p})\widehat{f}(\xi),\text{ \ }\xi\in\mathbb{Q}_{p}.
\]
By \cite[Theorem 3.1]{BGPW}, $\Phi(\mathfrak{D)}$ is a homogeneous
hierarchical Laplacian. The eigenvalues $\lambda(B)$\ of the operator
$\Phi(\mathfrak{D)}$ are of the form
\begin{equation}
\lambda(B)=\Phi\left(  \frac{p}{m(B)}\right)  . \label{Lambda-Phi eigenvalue}%
\end{equation}
\ Let $p(t,x,y)$ be the heat kernel associated with the operator
$\Phi(\mathfrak{D}).$ Assume that both $\Phi$ and $\Phi^{-1}$ are doubling,
then equations (\ref{HK-Ex}) and (\ref{Spectral asympt.}) apply. Since
$m(x\curlywedge y)=\left\vert x-y\right\vert _{p}$ we obtain
\begin{equation}
p(t,x,y)\asymp t\cdot\min\left\{  \frac{1}{t}\Phi^{-1}\left(  \frac{1}%
{t}\right)  ,\frac{1}{\left\vert x-y\right\vert _{p}}\Phi\left(  \frac
{1}{\left\vert x-y\right\vert _{p}}\right)  \right\}  , \label{HK-bounds II}%
\end{equation}
and%
\begin{equation}
p(t,x,x)\asymp\Phi^{-1}\left(  \frac{1}{t}\right)  . \label{J-bounds I}%
\end{equation}
The Taibleson-Vladimirov operator $\mathfrak{D}^{\alpha}$ is $L^{2}%
$-multiplier. By what we said above, its heat kernel $p_{\alpha}(t,x,y)$
satisfy
\begin{equation}
p_{\alpha}(t,x,y)\asymp\frac{t}{(t^{1/\alpha}+\left\vert x-y\right\vert
_{p})^{1+\alpha}},
\end{equation}%
\[
p_{\alpha}(t,x,x)\asymp\frac{1}{t^{1/\alpha}}.
\]
The Markov semigroup $(e^{-t\mathfrak{D}^{\alpha}})_{t>0}$ is transient if and
only if $\alpha<1$. In the transient case the Green function is of the form%
\begin{equation}
R_{\alpha}(0,x,y)=\frac{1-p^{-\alpha}}{1-p^{\alpha-1}}\frac{1}{\left\vert
x-y\right\vert _{p}^{1-\alpha}}.
\end{equation}
For all facts listed above we refere the reader to \cite{BGP}, \cite{BGPW} and
\cite{BendikovKrupski}.

\subsection{Schr\"{o}dinger operator}

Let $L$ be a homogeneous hierarchical Laplacian acting on $(X,d,m)$. Notice
that thanks to homogenuity $X$ can be identified with certain locally compact
Abelian group equipped with translation invariant ultrametric $d$ and the Haar
measure $m$. This identification is not unique (!) One possible way to define
such identification is to choose the sequence $\{a_{n}\}$\ of forward degrees
associated with the tree of balls $\Upsilon(X)$. This sequence is two-sided if
$X$ is non-compact and perfect, it is one-sided if $X$ is compact and perfect,
or if $X$ is discrete. In the 1st case we identify $X$ with $\Omega_{a}$, the
ring of $a$-adic numbers, in the 2nd case with $\Delta_{a}\subset\Omega_{a}$,
the ring of $a$-adic integers, and in the 3rd case with the factor group
$\Omega_{a}/\Delta_{a}$. We refere to \cite[(10.1)-(10.11), (25.1),
(25.2)]{HewittRoss} for the comprehensive treatment of special groups
$\Omega_{a}$, $\Delta_{a}$ and $\Omega_{a}/\Delta_{a}$.

By this identification $-L$ becomes a translation invariant isotropic Markov
generator, it acts as $L^{2}$-multiplicator as explained in the preceding
subsection. Yet, by (\ref{Spectrum}), the operator $(-L,\mathcal{D})$ can be
regarded as a symmetric L\'{e}vy generator%
\begin{equation}
-Lf(x)=%
{\displaystyle\int\limits_{X}}
(f(x+y)-f(x))J(y)dm(y)\text{ } \label{Levy generator}%
\end{equation}
where the measure $J(y)dm(y)$ (the L\'{e}vy measure associated to $-L$) is
finite on the complement $O^{c}$ of any open neighbourhood $O$\ of the neutral
element. Respectively, the semigroup $(e^{-tL})_{t>0}$ acts as a weakly
continuous convolution semigroup of probability measures $(\mu_{t})_{t>0}$.
Each measure $\mu_{t}$ is absolutely continuous with respect to $m$ and admits
a continuous symmetric density $\mu_{t}(x).$ In particular, the transition
density $p(t,x,y)$\ can be expressed in the form $p(t,x,y)=\mu_{t}(x-y).$ The
same of course true for the $\lambda$-Green function $R(\lambda,x,y)$, the
density of the resolvent $(L+\lambda)^{-1}$ with respect to $m$.

Consider the Schr\"{o}dinger operator $Hu(x)=Lu(x)+V(x)u(x)$ where $V(x)$ is a
real locally bounded measurable function.

\begin{theorem}
\label{Schroedinger spectrum}The following properties hold true:

$(i)$ The operator $H$ is essentially self-adjoint.\footnote{For the classical
Schr\"{o}dinger operator similar statement is known as Sears's theorem. It
holds true if the potential admits certain low bound and may fail otherwise,
see \cite[Chapter II, Theorem 1.1 and Example 1.1]{BeresinShubin}}

$(ii)$ Assume that $V(x)$ tends to plus infinity at infinity. Then the
operator $H$ has a compact resolvent, so that its spectrum is discrete.

$(iii)$ Assume that $V(x)$ tends to zero at infinity. Then the essential
spectrum of $H$ coincides with the essential spectrum of $L$. (Thus, $Spec(H)$
is pure point and the negative part of the spectrum of $H$ consists of
isolated eigenvalues of finite multiplicity).

$(iv)$ Assume that $V(x)$ tends to zero at infinity, the Markov semigroup
$(e^{-tL})_{t>0}$ is transient and that
\[
p(t,x,x)\asymp\Phi^{-1}(1/t)
\]
for some $\Phi$ as in (\ref{Spectral asympt.}) (see also (\ref{J-bounds I})).
Then the number $Neg(H)$ of negative eigenvalues counted with their
multiplicity is finite and satisfies%
\begin{equation}
Neg(H)\leq C\int_{X}\Phi^{-1}\circ V_{-}(x)dm(x) \label{Neg(H)_leq_1}%
\end{equation}
for some constant $C>0$ and $V_{-}(x)=-\min\{V(x),0\}.$
\end{theorem}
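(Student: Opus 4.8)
My approach would treat the four parts in increasing order of difficulty, since $(i)$–$(iii)$ are essentially softanalysis and $(iv)$ is the quantitative heart of the statement.

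For part $(i)$, I would exhibit $\mathcal{D}$ as a core. Since $L$ is essentially self-adjoint on $\mathcal{D}$ with pure point spectrum (established in Section 2.2), and $V$ is real and locally bounded, the strategy is to show that $H=L+V$ is essentially self-adjoint on $\mathcal{D}$ by a cutoff/approximation argument: for $\phi\in\mathcal{D}$, both $L\phi$ and $V\phi$ lie in $L^2$, so $H$ is densely defined and symmetric; to get essential self-adjointness one checks that $\mathrm{Ran}(H\pm i)$ is dense, or equivalently that any $u\in L^2$ with $(H^\ast\mp i)u=0$ vanishes. Here the ultrametric structure helps: the locally constant compactly supported functions are dense and $L$ has a local character on each $p$-adic interval after truncation. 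Since $V$ is only assumed locally bounded (no global lower bound), one exploits that $L$ itself is bounded on the subspace of functions constant on rank-$r$ intervals, so on each such finite-dimensional-per-ball piece $H$ is self-adjoint; the nesting and the transience/recurrence structure then give the limiting argument. (The footnote's reference to Sears's theorem signals that the authors expect a standard argument with a minor twist.)

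For part $(ii)$, if $V\to+\infty$ then for any $\Lambda>0$ the set $\{V\le\Lambda\}$ is contained in a bounded, hence compact, subset of $X$; writing $H=L+V\ge L+V\wedge\Lambda$ and using that $(L+1)^{-1}$ maps into functions that are "small at infinity" plus a compactness input on the compact core, one shows $(H+c)^{-1}$ is a norm limit of finite-rank (or compact) operators, giving discrete spectrum. For part $(iii)$, $V\to0$ means $V$ is a relatively compact perturbation of $L$: concretely, $V(L+1)^{-1}$ is compact because $V$ decays and $(L+1)^{-1}$ has the continuous integrable kernel $R(1,x,y)$ from (2.1.10)-type formulas, so Weyl's theorem gives $\mathrm{Spec}_{\mathrm{ess}}(H)=\mathrm{Spec}_{\mathrm{ess}}(L)$; since $\mathrm{Spec}(L)=\{0\}\cup\{\lambda(B)\}$ is pure point, everything outside this closed set consists of isolated finite-multiplicity eigenvalues, in particular the negative ones.

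The real work is part $(iv)$, and this is where I expect the main obstacle. The plan is a Birman–Schwinger reduction: the number $Neg(H)$ of negative eigenvalues equals the number of eigenvalues $\ge1$ of the Birman–Schwinger operator $K_0=V_-^{1/2}\,L^{-1}\,V_-^{1/2}$ (using transience so that $L^{-1}$ exists with Green kernel $R(0,x,y)$ from (2.2.11)), so $Neg(H)\le \mathrm{Tr}(K_0^\gamma)$ for a suitable $\gamma$ or, more directly, $Neg(H)\le \#\{\text{eigenvalues of }K_0\ge1\}$. To convert this into the claimed bound $C\int_X \Phi^{-1}\circ V_-\,dm$, I would use the heat-kernel form: $L^{-1}=\int_0^\infty e^{-tL}\,dt$ and $p(t,x,x)\asymp\Phi^{-1}(1/t)$, so one runs a Rozenblum–Cwikel–Lieb-type argument — split the time integral at a scale depending on $V_-(x)$, estimate the "long-time" (low-energy) part of $V_-^{1/2}e^{-tL}V_-^{1/2}$ by its Hilbert–Schmidt or trace norm using $p(t,x,y)\le p(t,x,x)^{1/2}p(t,y,y)^{1/2}$ and the diagonal bound, and bound the "short-time" part by operator norm. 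The delicate point is choosing the cutoff scale pointwise as $t_\ast(x)$ with $\Phi^{-1}(1/t_\ast(x))\sim V_-(x)$, i.e. $t_\ast(x)=1/\Phi(V_-(x))$, and checking that the resulting integrals telescope to $\int \Phi^{-1}\circ V_-\,dm$; one must verify that the doubling hypotheses on $\Phi,\Phi^{-1}$ make the geometric-series summation over dyadic time scales converge with a constant independent of $V$. An alternative, perhaps cleaner in the ultrametric setting, is to discretize: cover $\{V_-> \lambda\}$ by balls, use that on each ball the quadratic form of $L$ dominates $\mathrm{diam}_\ast^{-1}$ times the $L^2$ norm (a hierarchical Faber–Krahn / Poincaré inequality), count how many disjoint balls are needed at each dyadic level of $V_-$, and sum — this produces exactly $\sum_\lambda (\text{number of balls at level }\lambda)$ which, by the relation $N(\tau)=1/V(1/\tau)$ and $\lambda(B)=\Phi(1/m(B))$, rearranges into $\int\Phi^{-1}\circ V_-\,dm$. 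I expect the authors take the Birman–Schwinger + heat-kernel route; the main obstacle is the bookkeeping in the RCL estimate, ensuring the constant $C$ depends only on the doubling constants of $\Phi$ and $\Phi^{-1}$ and not on $V$, and handling the borderline between the trace-class and operator-norm estimates at the cutoff scale.
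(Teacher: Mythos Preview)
Your outline for parts $(ii)$ and $(iii)$ matches the paper's approach: the paper refers to Riesz--Rellich type compactness for $(ii)$ and, for $(iii)$, proves $L$-compactness of $V$ (equivalently, compactness of $V(L+1)^{-1}$) via the Riesz--Kolmogorov criterion and then invokes Weyl's theorem. So those parts are fine.

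For part $(i)$ your decomposition is not quite the one that works. You propose restricting to ``functions constant on rank-$r$ intervals,'' where $L$ is bounded; but on that (infinite-dimensional, not compactly supported) subspace $V$ need not be bounded, since $V$ is only \emph{locally} bounded. The paper instead first splits $L=L_{O}+L_{O^{c}}$ for a fixed ball $O$ containing the neutral element, where $L_{O^{c}}f=J(O^{c})(f-a\ast f)$ is globally bounded and hence irrelevant for self-adjointness. The point is that $L_{O}$, unlike $L$, \emph{does} preserve the subspace $\mathfrak{H}_{B}$ of functions supported in a single ball $B$ in the horocycle of $O$; on each $\mathfrak{H}_{B}$ the restriction of $L_{O}$ is essentially self-adjoint and $V$ is bounded (since $B$ is compact), so $L_{O}+V$ is essentially self-adjoint as an orthogonal direct sum. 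Your phrase ``$L$ has a local character on each $p$-adic interval after truncation'' is the right intuition, but the argument only closes once this specific splitting is identified.

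For part $(iv)$ your plan is correct in principle but much longer than what the paper does. The paper does not run Birman--Schwinger or a pointwise time-cutoff argument itself; it simply quotes the general Cwikel--Lieb--Rozenblum inequality of Molchanov--Vainberg \cite{MolchanovVainberg2},
\[
Neg(H)\leq\frac{1}{c(\sigma)}\int_{X}V_{-}(x)\Bigl(\int_{\sigma/V_{-}(x)}^{\infty}p(t,x,x)\,dt\Bigr)\,dm(x),
\]
valid for any $\sigma>0$, and then uses the diagonal bound $p(t,x,x)\asymp\Phi^{-1}(1/t)$ together with doubling of $\Phi^{-1}$ to get $\int_{\tau}^{\infty}\Phi^{-1}(1/t)\,dt\asymp\tau\,\Phi^{-1}(1/\tau)$; substituting $\tau=\sigma/V_{-}(x)$ gives exactly $\int_{X}\Phi^{-1}\circ V_{-}\,dm$. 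Your Birman--Schwinger/heat-kernel route is essentially a reproof of the cited inequality in this setting, so it would succeed, but the paper treats that step as a black box and the only ``work'' in $(iv)$ is the one-line tail integral computation.
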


\begin{proof}
$(i)$ We follow the argument of \cite[Theorem 3.2]{Kochubey2004}. Let us
choose an open ball $O$ which contains the neutral element and write equation
(\ref{Levy generator}) in the form%
\begin{align*}
Lf(x)  &  =\left(
{\displaystyle\int\limits_{O}}
+%
{\displaystyle\int\limits_{O^{c}}}
\right)  [f(x)-f(x+y)]J(y)dm(y)\\
&  =L_{O}f(x)+L_{O^{c}}f(x).
\end{align*}
We have $Hf=L_{O}f+L_{O^{c}}f+Vf$, where the operator $V$ is the operator of
multiplication by the function $V(x)$. The operator $L_{O^{c}}f=J(O^{c}%
)(f-a\ast f)$, where $a(y)=J(y)1_{O^{c}}(y)/J(O^{c}),$ is bounded in
$L^{2}(X,m)$\ (as $f\rightarrow a\ast f$ \ is the operator of convolution with
probability measure $a(y)dm(y)$) and thus does not influence self-adjointness.
As $L_{O}$ is minus L\'{e}vy generator it is essentially self-adjoint (one
more way to make this conclusion is that the matrix of the operator $L_{O}%
$\ is diagonal in the basis $\{f_{B}\}$ of eigenfunctions of the operator $L$,
see \cite{Kozyrev}).

For any ball $B$ which belongs to the same horocycle $\mathcal{H}$ as $O$ we
denote $\mathfrak{H}_{B}$ the subspace of $L^{2}(X,m)$ which consists of all
functions $f$ \ having support in $B$. It is easy to see that $\mathfrak{H}%
_{B}$ is invariant with respect to symmetric operator $H_{O}=L_{O}+V$.
Moreover, $\mathfrak{H}_{B}$ reduces $H_{O}$.

The ultrametric space $X$ can be covered by a sequence of non-intersecting
balls $B_{n}$ (recall that due to the ultrametric property two balls of the
same diameter either coincide or do not itersect). This leads to the
orthogonal decomposition%
\[
L^{2}(X,m)=%
{\displaystyle\bigoplus\limits_{n}}
\mathfrak{H}_{B_{n}}%
\]
where each $\mathfrak{H}_{B_{n}}$ reduces $H_{O}$. The restriction of the
essentially self-adjoint operator $L_{O}$ to its invariant subspace
$\mathfrak{H}_{B_{n}}$ is an essentially self-adjoint operator, while the
restriction of the operator $V$ is bounded. Thus $H_{O}$ is essentially
self-adjoint as orthogonal sum of essentially self-adjoint operators $H_{O,n}%
$, the restriction of $H_{O}$ to $\mathfrak{H}_{B_{n}}$.

$(ii)$ The proof is similar to the one for the Schr\"{o}dinger operators given
in \cite[Theorem X.3]{Vladimirov94}; the main tools are boundness from below
of the operator $H$ and the analogues of the Riesz and Rellich compactness
criteria for subsets of $L^{2}(X,m)$.

$(iii)$ Let us show that the operator $V$ is $L-$\emph{compact. }Then, by
\cite[Theorem IV.5.35]{Kato}, the essential spectrums of the operators $H$ and
$L$ coinside. Recall that $L-$compactness means that if a sequence $\{u_{n}\}$
is such that both $\{u_{n}\}$ and $\{Lu_{n}\}$ are bounded then there exists a
subsequence $\{u_{n}^{\prime}\}\subset\{u_{n}\}$ such that the sequence
$\{Vu_{n}^{\prime}\}$ converges.

1. Denote $v_{n}=Lu_{n}+u_{n}.$ By assumption, the sequence $\{v_{n}\}$ is
bounded, and $u_{n}=r_{1}\ast v_{n}$. It follows that the quantity%
\[
\left(
{\displaystyle\int}
\left\vert u_{n}(x+h)-u_{n}(x)\right\vert ^{2}dm(x)\right)  ^{1/2}%
\leq\left\Vert v_{n}\right\Vert _{L^{2}}%
{\displaystyle\int}
\left\vert r_{1}(z+h)-r_{1}(z)\right\vert dm(z)
\]
tends to zero uniformly in $n$ as $h$ tends to the neutral element. Thus, the
sequence $\{u_{n}\}$ consists of equicontinuous on the whole in $L^{2}(X,m)$
functions. The same is true for the sequence $\{Vu_{n}\}$. Indeed, for any
ball $B$ which contains the neutral element we write%
\[
\left(
{\displaystyle\int}
\left\vert Vu_{n}(x+h)-Vu_{n}(x)\right\vert ^{2}dm(x)\right)  ^{1/2}\leq
I+II+III,
\]
where%
\[
I=\left\Vert V\right\Vert _{L^{\infty}}\left(
{\displaystyle\int}
\left\vert u_{n}(x+h)-u_{n}(x)\right\vert ^{2}dm(x)\right)  ^{1/2},
\]%
\[
II=\left\Vert u_{n}\right\Vert _{L^{2}}\left(
{\displaystyle\int_{B}}
\left\vert V(x+h)-V(x)\right\vert ^{2}dm(x)\right)  ^{1/2},
\]%
\[
III=\left\Vert u_{n}\right\Vert _{L^{2}}\sup_{x\in B^{c}}\left\vert
V(x+h)-V(x)\right\vert .
\]
Clearly $I,II$ and $III$ tend to zero uniformly in $n$ as $h$ tends to the
neutral element and $B\nearrow X$.

2. The sequence $\{Vu_{n}\}$ consists of functions with equicontinuous
$L^{2}(X,m)$ integrals at infinity. Indeed, for any ball $B$ which contains
the neutral element we have%
\[%
{\displaystyle\int\limits_{B^{c}}}
\left\vert Vu_{n}(x)\right\vert ^{2}dm(x)\leq\left\Vert u_{n}\right\Vert
_{L^{2}}\sup_{x\in B^{c}}\left\vert V(x)\right\vert \rightarrow0
\]
uniformly in $n$ as $B\nearrow X.$

Thus, the sequence $\{Vu_{n}\}$ is bounded in $L^{2}(X,m)$, consists of
equicontinuous on whole in $L^{2}(X,m)$ \ functions with equicontinuous
$L^{2}(X,m)$ integrals at infinity. By the Riesz-Kolmogorov criterion of
compactness in $L^{2}(X,m)$, the set $\{Vu_{n}\}$ is compact, whence it
contains a convergent subsequence $\{Vu_{n}^{\prime}\},$\ as claimed.

$(iv)$ By \cite[Theorem 2.1 and Remark 2.2]{MolchanovVainberg2}, the number of
negative eigenvalues counted with their multiplicity can be estimated as
follows:
\begin{equation}
Neg(H)\leq\frac{1}{c(\sigma)}\int_{X}V_{\_}(x)\left(  \int_{\frac{\sigma
}{V_{-}(x)}}^{\infty}p(t,x,x)dt\right)  dm(x) \label{MolchanovVainberg}%
\end{equation}
holds for any $\sigma>0$ with $c(\sigma)=e^{-\sigma}\int_{0}^{\infty
}z(z+\sigma)^{-1}e^{-z}dz$. Since we assume that $\Phi^{-1}$\ is doubling,
\[
\int_{\tau}^{\infty}p(t,x,x)dt\asymp\int_{\tau}^{\infty}\Phi^{-1}%
(1/t)dt\asymp\tau\Phi^{-1}(1/\tau)\text{ at }\infty.
\]
Choosing $\sigma$ big enough and applying the inequality
(\ref{MolchanovVainberg}) we obtain the inequality (\ref{Neg(H)_leq_1}), as desired.
\end{proof}

\section{Discrete ultrametric spaces}

Recall that a hierarchical Laplacian $L$ acting on a homogeneous ultrametric
measure space $(X,d,m)$\ is called \emph{homogeneous} if it is invariant under
the action of the group of isometries. By the homogenuity property for any two
balls $A$ and $B$ having the same diameter the eigenvalues $\lambda(A)$ and
$\lambda(B)$ coinside. We denote by $\lambda_{k}$ the common value of
eigenvalues for balls which belong to the horocycle $\mathcal{H}_{k}$. In this
section we assume that the ultrametric mesure space $(X,d,m)$ is
\emph{countably infinite and homogeneous}.

\subsection{Rank one perturbations}

Let us consider the Schr\"{o}dinger operator $H=L-V$ with potential
$V=\sigma\delta_{a}$.\ The operator $V:f(x)\rightarrow V(x)f(x)$ can be
written in the form%
\[
Vf(x)=\sigma(f,\delta_{a})\delta_{a}(x),
\]
i.e. $H$ can be regarded as rank one perturbation of the operator $L$.

Denote $\psi(x)=\mathcal{R}(\lambda,x,y),$ the solution of the equation%
\[
L\psi(x)-\lambda\psi(x)=\delta_{y}(x).
\]
Let $\psi_{V}(x)=\mathcal{R}_{V}(\lambda,x,y)$ be the solution of the equation%
\[
H\psi_{V}(x)-\lambda\psi_{V}(x)=\delta_{y}(x).
\]
Notice that $L$ and $H$ are symmetric operators whence both $(x,y)\rightarrow
\mathcal{R}(\lambda,x,y)$ and \ $(x,y)\rightarrow\mathcal{R}_{V}(\lambda,x,y)$
are symmetric functions.

\begin{proposition}
\label{Krein identity I}In the notation from above%
\begin{equation}
\mathcal{R}_{V}(\lambda,x,y)=\mathcal{R}(\lambda,x,y)+\frac{\sigma
\mathcal{R}(\lambda,x,a)\mathcal{R}(\lambda,a,y)}{1-\sigma\mathcal{R}%
(\lambda,a,a)}. \label{Krein}%
\end{equation}
In particular,
\begin{equation}
\mathcal{R}_{V}(\lambda,a,y)=\frac{\mathcal{R}(\lambda,a,y)}{1-\sigma
\mathcal{R}(\lambda,a,a)}. \label{Krein I}%
\end{equation}
and%
\begin{equation}
\mathcal{R}_{V}(\lambda,a,a)=\frac{\mathcal{R}(\lambda,a,a)}{1-\sigma
\mathcal{R}(\lambda,a,a)}. \label{Krein I'}%
\end{equation}

\end{proposition}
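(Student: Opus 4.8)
The plan is to prove the Krein resolvent identity (\ref{Krein}) by a direct algebraic manipulation, treating $H=L-V$ with $V=\sigma\delta_a$ as a rank-one perturbation and solving the resolvent equation explicitly. First I would set $\psi_V(x)=\mathcal{R}_V(\lambda,x,y)$, so that by definition $(L-\lambda)\psi_V(x)-\sigma\psi_V(a)\delta_a(x)=\delta_y(x)$, where I have used that $V\psi_V(x)=\sigma(\psi_V,\delta_a)\delta_a(x)=\sigma\psi_V(a)\delta_a(x)$ (here the inner product picks out the value at $a$ because $\delta_a$ is the indicator of a single point and we are on a discrete space; the normalisation is fixed by $L\mathcal{R}(\lambda,\cdot,y)-\lambda\mathcal{R}(\lambda,\cdot,y)=\delta_y$). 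Rearranging gives $(L-\lambda)\psi_V(x)=\delta_y(x)+\sigma\psi_V(a)\delta_a(x)$, and applying the resolvent $\mathcal{R}_\lambda=(L-\lambda)^{-1}$ of the unperturbed operator — which exists as a bounded operator with kernel $\mathcal{R}(\lambda,x,y)$ for $\lambda$ outside $\mathrm{Spec}(L)$ — yields
\begin{equation}
\psi_V(x)=\mathcal{R}(\lambda,x,y)+\sigma\psi_V(a)\mathcal{R}(\lambda,x,a).
\label{plan-implicit}
\end{equation}

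The second step is to determine the unknown constant $\psi_V(a)$. Setting $x=a$ in (\ref{plan-implicit}) gives $\psi_V(a)=\mathcal{R}(\lambda,a,y)+\sigma\psi_V(a)\mathcal{R}(\lambda,a,a)$, hence $\psi_V(a)(1-\sigma\mathcal{R}(\lambda,a,a))=\mathcal{R}(\lambda,a,y)$, so that
\begin{equation}
\psi_V(a)=\frac{\mathcal{R}(\lambda,a,y)}{1-\sigma\mathcal{R}(\lambda,a,a)},
\label{plan-constant}
\end{equation}
provided $1-\sigma\mathcal{R}(\lambda,a,a)\neq 0$. This already establishes (\ref{Krein I}). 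Substituting (\ref{plan-constant}) back into (\ref{plan-implicit}) produces the full identity (\ref{Krein}), and then putting $y=a$ (using symmetry $\mathcal{R}(\lambda,a,a)=\mathcal{R}(\lambda,a,a)$) gives (\ref{Krein I'}).

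The main technical point — and the only place where a little care is genuinely needed — is the justification that $\mathcal{R}_\lambda$ may be applied to the distributional right-hand side and that the formal computation above is legitimate, i.e. that $\psi_V=\mathcal{R}_V(\lambda,\cdot,y)$ defined by (\ref{plan-implicit})–(\ref{plan-constant}) is indeed the resolvent kernel of $H$. In the discrete setting $\delta_a,\delta_y\in L^2(X,m)$ (after the obvious normalisation by $m(\{a\})$), $L$ is essentially self-adjoint with pure point spectrum, and $H=L-V$ is a bounded perturbation of $L$ when $\sigma$ is a scalar and $V$ has rank one, so $\mathcal{R}_V(\lambda)=(H-\lambda)^{-1}$ exists as a bounded operator for $\lambda\notin\mathrm{Spec}(H)$; one then checks directly that the operator with kernel given by the right-hand side of (\ref{Krein}) maps onto the domain and inverts $H-\lambda$, which is precisely the algebra carried out above run in reverse. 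I would also remark that the condition $1-\sigma\mathcal{R}(\lambda,a,a)\neq 0$ is automatic for $\lambda\notin\mathrm{Spec}(L)\cup\mathrm{Spec}(H)$, since a zero of this factor corresponds exactly to an eigenvalue of $H$ (the resolvent $\mathcal{R}(\lambda,\cdot,a)$ then being the associated eigenfunction), which is the familiar Birman–Schwinger/Krein mechanism and will be exploited in the sequel.
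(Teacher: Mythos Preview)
Your argument is correct and follows essentially the same route as the paper's own proof: rewrite $(H-\lambda)\psi_V=\delta_y$ as $(L-\lambda)\psi_V=\delta_y+\sigma\psi_V(a)\delta_a$, apply the unperturbed resolvent to obtain $\psi_V(x)=\mathcal{R}(\lambda,x,y)+\sigma\psi_V(a)\mathcal{R}(\lambda,x,a)$, evaluate at $x=a$ to solve for the constant $\psi_V(a)$, and substitute back. The additional remarks you include on well-posedness and the Birman--Schwinger interpretation of the zeros of $1-\sigma\mathcal{R}(\lambda,a,a)$ are accurate and go slightly beyond what the paper spells out.
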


\begin{proof}
We have%
\begin{align*}
L\psi_{V}(x)-\lambda\psi_{V}(x)  &  =\delta_{y}(x)+\sigma\delta_{a}(x)\psi
_{V}(x)\\
&  =\delta_{y}(x)+\sigma\delta_{a}(x)\psi_{V}(a).
\end{align*}
It follows that%
\begin{equation}
\psi_{V}(x)=\mathcal{R}(\lambda,x,y)+\sigma\psi_{V}(a)\mathcal{R}%
(\lambda,x,a). \label{Krein I''}%
\end{equation}
Setting $x=a$ in the above equation we obtain%
\[
\psi_{V}(a)=\mathcal{R}(\lambda,a,y)+\sigma\psi_{V}(a)\mathcal{R}%
(\lambda,a,a)
\]
or%
\[
\psi_{V}(a)(1-\sigma\mathcal{R}(\lambda,a,a))=\mathcal{R}(\lambda,a,y).
\]
Since $\psi_{V}(a)=\mathcal{R}_{V}(\lambda,a,y)$ we obtain equation
(\ref{Krein I'}). In turn, equations (\ref{Krein I'}) and (\ref{Krein I''})
imply (\ref{Krein}) and (\ref{Krein I}).
\end{proof}

Let $\Upsilon(X)$ be the tree of balls associated with the ultrametric space
$(X,d)$.\ Consider in $\Upsilon(X)$ the infinite geodesic path from $a$ to
$\varpi:$\ $\{a\}=B_{0}\varsubsetneq B_{1}\varsubsetneq...\varsubsetneq
B_{k}\varsubsetneq...$ . The series below converges uniformly and in $L^{2},$%
\[
\delta_{a}=\left(  \frac{1_{B_{0}}}{m(B_{0})}-\frac{1_{B_{1}}}{m(B_{1}%
)}\right)  +\left(  \frac{1_{B_{1}}}{m(B_{1})}-\frac{1_{B_{2}}}{m(B_{2}%
)}\right)  +...=\sum_{k=0}^{\infty}f_{B_{k}}.
\]
Notice that all $f_{B_{k}}$ are eigenfunctions of the operator $L$, to be more
precise $Lf_{B_{k}}=\lambda(B_{k+1})f_{B_{k}}=\lambda_{k+1}f_{B_{k}}.$ Observe
that by definition $\mathcal{R}(\lambda,x,y)=(L-\lambda)^{-1}\delta_{y}(x)$
whence we obtain
\begin{align*}
\mathcal{R}(\lambda,a,a)  &  =\frac{1}{\lambda_{1}-\lambda}f_{B_{0}}%
(a)+\frac{1}{\lambda_{2}-\lambda}f_{B_{1}}(a)+...\\
&  =\frac{1}{\lambda_{1}-\lambda}\left(  \frac{1}{m(B_{0})}-\frac{1}{m(B_{1}%
)}\right) \\
&  +\frac{1}{\lambda_{2}-\lambda}\left(  \frac{1}{m(B_{1})}-\frac{1}{m(B_{2}%
)}\right)  +...\text{ ,}%
\end{align*}
or in the final form%
\begin{equation}
\mathcal{R}(\lambda,a,a)=%
{\displaystyle\sum\limits_{k=1}^{\infty}}
\frac{A_{k}}{\lambda_{k}-\lambda},\text{ \ }A_{k}=\left(  \frac{1}{m(B_{k-1}%
)}-\frac{1}{m(B_{k})}\right)  . \label{Resolvent poles}%
\end{equation}
Since $\lambda\rightarrow\mathcal{R}(\lambda,a,a)$ is an increasing function,
the equation%
\begin{equation}
1-\sigma\mathcal{R}(\lambda,a,a)=0,\text{ \ }\sigma\neq0,
\label{sigma-equation}%
\end{equation}
has precisely one solution $\lambda_{k}^{\sigma}$ lying in each open interval
$\left]  \lambda_{k+1},\lambda_{k}\right[  $ ,%
\[
\lambda_{k+1}<\lambda_{k}^{\sigma}<\lambda_{k}\text{, \ }k=1,2,...\text{ .}%
\]
\textbf{Claim 1} All numbers $\lambda_{k}^{\sigma}$ are eigenvalues of the
operator $H$. Indeed, the function $\psi(x)=\mathcal{R}(\lambda,x,a)$ with
$\lambda=$ $\lambda_{k}^{\sigma}$ satisfies the equation%
\begin{align*}
H\psi(x)-\lambda\psi(x)  &  =L\psi(x)-\lambda\psi(x)-\sigma\delta_{a}%
(x)\psi(x)\\
&  =L\psi(x)-\lambda\psi(x)-\sigma\delta_{a}(x)\psi(a)\\
&  =L\psi(x)-\lambda\psi(x)-\delta_{a}(x)=0.
\end{align*}
\textbf{Claim 2 }All numbers $\lambda_{k}$ are eigenvalues of the operator
$H$. Indeed, for any ball $B$ which does not contain $a$ but belongs to the
horocycle as $\mathcal{H}_{k-1}$ we have
\[
Hf_{B}=Lf_{B}=\lambda_{k}f_{B}.
\]
When $\sigma>0$ there may exist one more eigenvalue $\lambda_{-}^{\sigma}<0,$
a solution of the equation (\ref{sigma-equation}). Indeed, $\lambda
\rightarrow\mathcal{R}(\lambda,a,a)$ is an increasing function, continuous on
the interval $]-\infty,0]$. Since $\mathcal{R}(\lambda,a,a)\rightarrow0$ as
$\lambda\rightarrow-\infty$ and $\mathcal{R}(\lambda,a,a)\rightarrow
\mathcal{R}(0,a,a)\leq+\infty$ as $\lambda\rightarrow-0$, equation
(\ref{sigma-equation}) has a unique solution $\lambda=\lambda_{-}^{\sigma}<0$
in the following two cases:

\begin{description}
\item[$(i)$] The semigroup $(e^{-tL})_{t>0}$ is recurrent, i.e. $\mathcal{R}%
(0,a,a)=+\infty.$

\item[$(ii)$] The semigroup $(e^{-tL})_{t>0}$ is transient, i.e.
$\mathcal{R}(0,a,a)<+\infty,$ and $\sigma$ is such that $\mathcal{R}%
(0,a,a)>1/\sigma.$
\end{description}

Summarizing all the above we obtain the following result

\begin{proposition}
\label{Spectrum I} The operator $H=L-V$ with $V=$\ $\sigma\delta_{a}$ has at
most one negative eigenvalue and countably many positive eigenvalues with
accumulating point $0$. $\ $The operator $H$ has precisely one negative
eigenvalue $\lambda^{\sigma}$ if and only if $\sigma>0$ and one of the
conditions $(i)$ and $(ii)$ above holds. In this case the set $Spec(H)$
consists of points
\[
\lambda_{-}^{\sigma}<0<...<\lambda_{k+1}<\lambda_{k}^{\sigma}<\lambda
_{k}<...<\lambda_{2}<\lambda_{1}^{\sigma}<\lambda_{1}.
\]
Otherwise the set $Spec(H)$ consists of points%
\[
0<...<\lambda_{k+1}<\lambda_{k}^{\sigma}<\lambda_{k}<...<\lambda_{2}%
<\lambda_{1}^{\sigma}<\lambda_{1}%
\]
The eigenvalues $\lambda_{k}$ are at the same time eigenvalues of the operator
$L$. All $\lambda_{k}$ have infinite multiplicity and compactly supported
eigenfunctions, the eigenfunctions of the operator $L$ whose supports do not
contain $a$. The eigenvalue $\lambda_{k}^{\sigma}$ (resp. $\lambda_{-}%
^{\sigma}$) is the unique solution of the equation \ref{sigma-equation} in the
interval $\left]  \lambda_{k+1},\lambda_{k}\right[  $ (resp. in the interval
$]-\infty,0[$). $\lambda_{k}^{\sigma}$ (resp. $\lambda_{-}^{\sigma}$) has
multiplicity one and non-compactly supported eigenfunction $\psi
_{k}(x)=\mathcal{R}(\lambda_{k}^{\sigma},x,a)$ (resp. $\psi_{-}(x)=\mathcal{R}%
(\lambda_{-}^{\sigma},x,a)$).
\end{proposition}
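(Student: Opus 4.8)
The statement is essentially a bookkeeping summary of the analysis already carried out above (the Krein-type resolvent identity of Proposition~\ref{Krein identity I}, the pole expansion (\ref{Resolvent poles}) of $\mathcal{R}(\lambda,a,a)$, and the interlacing discussion). So the proof should proceed by assembling those pieces rather than by introducing anything genuinely new. First I would recall that $H=L-V$ with $V=\sigma\delta_a$ is a rank-one perturbation, and that by Proposition~\ref{Krein identity I} the perturbed resolvent kernel $\mathcal{R}_V(\lambda,x,y)$ differs from $\mathcal{R}(\lambda,x,y)$ only through the scalar factor $(1-\sigma\mathcal{R}(\lambda,a,a))^{-1}$. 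Hence the poles of $\lambda\mapsto\mathcal{R}_V(\lambda,x,y)$ that are not already poles of $\mathcal{R}(\lambda,x,y)$ — i.e. the eigenvalues of $H$ outside $\mathrm{Spec}(L)$ — are exactly the zeros of $1-\sigma\mathcal{R}(\lambda,a,a)$, which is equation (\ref{sigma-equation}).

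Next I would treat the two families of eigenvalues separately. For the $\lambda_k$: since $X$ is covered by balls, for any ball $B$ not containing $a$ and lying in the horocycle $\mathcal{H}_{k-1}$ one has $Vf_B=\sigma\delta_a f_B=0$, so $Hf_B=Lf_B=\lambda_k f_B$; there are infinitely many such $B$, giving infinite multiplicity and compactly supported eigenfunctions (this is Claim~2). For the $\lambda_k^\sigma$: using the pole expansion (\ref{Resolvent poles}), the function $\lambda\mapsto\mathcal{R}(\lambda,a,a)$ is strictly increasing and continuous on each interval $]\lambda_{k+1},\lambda_k[$, running from $-\infty$ (as $\lambda\downarrow\lambda_{k+1}$, where the $A_{k+1}/(\lambda_{k+1}-\lambda)$ term blows up to $-\infty$... more carefully: to $+\infty$ on one side of each pole) — the point is that between two consecutive poles it sweeps all of $\mathbb{R}$, so (\ref{sigma-equation}) has a unique root there for each $\sigma\neq0$. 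By Claim~1, $\psi(x)=\mathcal{R}(\lambda_k^\sigma,x,a)$ is then a genuine eigenfunction; it is not compactly supported because $\mathcal{R}(\lambda,\cdot,a)$ as given by the series expansion has support spreading along the whole geodesic to $\varpi$, and its multiplicity is one since the eigenvalue lies strictly between two consecutive points of $\mathrm{Spec}(L)$ and any eigenfunction for such a $\lambda$ must be a multiple of $\mathcal{R}(\lambda,\cdot,a)$ (solve $L\psi-\lambda\psi=c\,\delta_a$ for the constant $c$).

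Then I would isolate the negative eigenvalue. On $]-\infty,0[$ the function $\lambda\mapsto\mathcal{R}(\lambda,a,a)$ is again continuous and strictly increasing, with limit $0$ at $-\infty$ and limit $\mathcal{R}(0,a,a)\in(0,+\infty]$ at $0^-$; so (\ref{sigma-equation}) has a (necessarily unique) root in $]-\infty,0[$ precisely when $\sigma>0$ and $1/\sigma<\mathcal{R}(0,a,a)$, which splits into case $(i)$ $\mathcal{R}(0,a,a)=+\infty$ (recurrent) and case $(ii)$ $\mathcal{R}(0,a,a)<+\infty$ with $\sigma>1/\mathcal{R}(0,a,a)$ (transient). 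When $\sigma<0$ there is no root in $]-\infty,0[$ and no root in any $]\lambda_{k+1},\lambda_k[$ with the wrong sign... actually one still gets one root per gap, but none negative; and when $\sigma=0$ there is no perturbation. This yields the claimed two alternative descriptions of $\mathrm{Spec}(H)$, the interlacing ordering being immediate from "one root of (\ref{sigma-equation}) in each gap $]\lambda_{k+1},\lambda_k[$". Finally I would note that completeness — i.e. that these are \emph{all} the eigenvalues and $\mathrm{Spec}(H)$ is pure point — follows from part $(iii)$ of Theorem~\ref{Schroedinger spectrum} together with the fact that the compactly supported $f_B$'s not meeting $a$ span the orthogonal complement of the (separable) cyclic subspace generated by $\delta_a$, on which $H$ acts as the Jacobi-type operator diagonalized by the $\mathcal{R}(\lambda_k^\sigma,\cdot,a)$.

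The only real obstacle is the rigor of the last point — verifying that no eigenvalues are missed and that each $\lambda_k^\sigma$ has multiplicity exactly one. Everything else is continuity/monotonicity of an explicit increasing function and direct substitution into the eigenvalue equation; these are routine. So in the write-up I would spend the bulk of the argument on the decomposition $L^2(X,m)=\overline{\mathrm{span}}\{f_B:a\notin B\}\oplus\mathcal{C}(\delta_a)$, where $\mathcal{C}(\delta_a)=\overline{\mathrm{span}}\{f_{B_k}:k\ge0\}$ is the cyclic subspace, observe that $H$ preserves both summands, that on the first it equals $L$ (giving the $\lambda_k$), and that on the second it is a rank-one perturbation of a diagonal operator whose spectral analysis is exactly equations (\ref{Krein})–(\ref{sigma-equation}); the simplicity of each $\lambda_k^\sigma$ then follows because the cyclic subspace has, by construction, simple spectrum.
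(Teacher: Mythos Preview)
Your proposal is correct and follows essentially the same route as the paper: there the proposition is presented explicitly as a summary (``Summarizing all the above we obtain the following result'') of the Krein identity (\ref{Krein}), the pole expansion (\ref{Resolvent poles}), and Claims~1 and~2, with no separate proof environment. You actually go further than the paper does at this point, since you address completeness and the simplicity of each $\lambda_k^\sigma$ via the cyclic-subspace decomposition $L^2=\overline{\mathrm{span}}\{f_B:a\notin B'\}\oplus\overline{\mathrm{span}}\{f_{B_k}:k\ge0\}$; the paper asserts multiplicity one but does not argue it in Section~3.1, and the tool that would deliver it---the rank-one case of Lemma~\ref{perturbation of finite rank}---appears only in the following subsection.
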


\subsection{Finite rank perturbations}

Let us consider the Schr\"{o}dinger operator $H=L-V$ with potential
$V=\sum_{i=1}^{N}\sigma_{i}\delta_{a_{i}}$. The operator $V:f(x)\rightarrow
V(x)f(x)$ can be written in the form%
\[
Vf(x)=\sum_{i=1}^{N}\sigma_{i}(f,\delta_{a_{i}})\delta_{a_{i}}(x),
\]
i.e. $H$ can be regarded as rank $N$ perturbation of the operator $L$.

\begin{lemma}
\label{perturbation of finite rank}Let $A$ and $B$ be two symmetric operators,
\textrm{rank}$(B)=N$ and $H=A+B$. Let $(a,b)$ be an interval lying in the
complement of the set $Spec(A)$. The set $Spec(H)\cap(a,b)$ consists of at
most $N$ eigenvalues.
\end{lemma}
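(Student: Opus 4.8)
The plan is to use the min-max (Courant–Fischer) characterization of eigenvalues together with the fact that a finite-rank symmetric perturbation can only move eigenvalues a controlled amount. Here the cleanest route is via the dimension of spectral subspaces. Since $(a,b)\cap Spec(A)=\varnothing$, the spectral projection $E_A(a,b)=0$. I would argue by contradiction: suppose $Spec(H)\cap(a,b)$ contains at least $N+1$ eigenvalues (counted with multiplicity), and let $W\subset L^2$ be an $(N+1)$-dimensional subspace spanned by corresponding eigenfunctions of $H$, so that for every $u\in W$,
\[
a\|u\|^2 < \langle Hu,u\rangle < b\|u\|^2 .
\]
Since $\mathrm{rank}(B)=N$, the kernel of $B$ has codimension at most $N$, so $W\cap\ker B$ is nontrivial — there is a unit vector $u_0\in W$ with $Bu_0=0$. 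For that vector $\langle Au_0,u_0\rangle=\langle Hu_0,u_0\rangle\in(a,b)$. But $E_A(a,b)=0$ forces $Spec(A)\cap(a,b)=\varnothing$, and for a self-adjoint operator this means either $\langle Au_0,u_0\rangle\le a$ or $\langle Au_0,u_0\rangle\ge b$ whenever $u_0$ lies in the appropriate half of the spectral decomposition. To make this airtight I would split $W$ further: write $u_0=u_0^- + u_0^+$ where $u_0^-=E_A(-\infty,a]u_0$ and $u_0^+=E_A[b,\infty)u_0$ (these exhaust $u_0$ because of the spectral gap), and play the same codimension count to land inside one of the two halves.

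More precisely, here is the step order I would follow. First, record that $E_A(a,b)=0$, hence $L^2 = \mathcal{H}_- \oplus \mathcal{H}_+$ with $\mathcal{H}_-=\mathrm{Ran}\,E_A(-\infty,a]$ and $\mathcal{H}_+=\mathrm{Ran}\,E_A[b,\infty)$, and $\langle Av,v\rangle\le a\|v\|^2$ for $v\in\mathcal{H}_-$, $\langle Av,v\rangle\ge b\|v\|^2$ for $v\in\mathcal{H}_+$. Second, assume for contradiction $\dim \mathrm{Ran}\,E_H(a,b)\ge N+1$ and pick such a subspace $W$. Third, intersect: the space $W' := W\cap\ker B$ has $\dim W'\ge (N+1)-N=1$. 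Fourth — and this is the step needing a little care — I want a nonzero vector in $W'$ lying entirely in $\mathcal{H}_-$ or entirely in $\mathcal{H}_+$; this does not follow from a single codimension count, so instead I observe that for any nonzero $u\in W'$ we have $Hu=Au$ in the quadratic form sense, so $a\|u\|^2<\langle Au,u\rangle<b\|u\|^2$; writing $u=v_-+v_+$ with $v_\pm\in\mathcal H_\pm$ gives $\langle Au,u\rangle=\langle Av_-,v_-\rangle+\langle Av_+,v_+\rangle\le a\|v_-\|^2 + $ (something $\ge b\|v_+\|^2$), and combined with $\|u\|^2=\|v_-\|^2+\|v_+\|^2$ one checks $\langle Au,u\rangle\notin(a,b)$ unless $u=0$ — contradiction. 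Fifth, conclude $\dim \mathrm{Ran}\,E_H(a,b)\le N$, which is the assertion.

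The one genuine subtlety — the main obstacle — is the interface between the quadratic-form identity $\langle Hu,u\rangle=\langle Au,u\rangle$ for $u\in\ker B$ and the spectral bound: I must make sure the eigenfunctions of $H$ lie in the form domain of $A$ and that "counted with multiplicity" is handled by working with $\dim \mathrm{Ran}\,E_H(a,b)$ throughout rather than with a naive list of eigenvalues. In the application in this paper $A=L$ (or a bounded operator), $B$ is a finite linear combination of rank-one projections onto the $\delta_{a_i}$, everything is genuinely self-adjoint with a common core $\mathcal D$, and $\ker B=\{f:\,f(a_i)=0\ \forall i\}$ is manifestly form-closed, so this technicality is harmless; I would state it as a one-line remark rather than belabor it. The final two displayed inequalities about $\langle Au,u\rangle\notin(a,b)$ are elementary once $u$ is decomposed along the spectral gap, so I would not grind through them in detail.
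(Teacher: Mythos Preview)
Your overall strategy---find a nonzero $u\in W\cap\ker B$ by a codimension count and then contradict the spectral gap of $A$---is sound, and it is genuinely different from the paper's proof, which proceeds by induction on $N$ using the strict monotonicity of $\lambda\mapsto(R_\lambda f_1,f_1)$ in the rank-one case. But your execution of the contradiction step is wrong. The assertion that $\langle Au,u\rangle\notin(a,b)$ whenever $u\ne0$ and $(a,b)\cap Spec(A)=\varnothing$ is simply false: take $A$ self-adjoint with spectrum $\{-10,10\}$, orthonormal eigenvectors $e_\pm$, gap $(-1,1)$, and $u=(e_-+e_+)/\sqrt2$; then $\langle Au,u\rangle=0\in(-1,1)$. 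The numerical range of a self-adjoint operator is the closed convex hull of its spectrum, so it \emph{fills} the gap rather than avoiding it. Your ``one checks $\langle Au,u\rangle\notin(a,b)$'' sentence is exactly where the argument collapses, and the decomposition $u=v_-+v_+$ does not help: the two terms $\langle Av_-,v_-\rangle\le a\|v_-\|^2$ and $\langle Av_+,v_+\rangle\ge b\|v_+\|^2$ pull in opposite directions and their sum can land anywhere.

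The repair is to use the operator identity, not the quadratic form. For $u\in W\cap\ker B$ you have $Au=Hu$ as vectors, not merely $\langle Au,u\rangle=\langle Hu,u\rangle$. Since $W$ is $H$-invariant with $Spec(H|_W)\subset(a,b)$, setting $c=(a+b)/2$ and $r=(b-a)/2$ gives $\|(H-c)u\|<r\|u\|$ for every nonzero $u\in W$; on the other hand the spectral gap means $\mathrm{dist}(c,Spec(A))\ge r$, hence $\|(A-c)v\|\ge r\|v\|$ for all $v$ in the domain. Thus $r\|u\|\le\|(A-c)u\|=\|(H-c)u\|<r\|u\|$, the desired contradiction. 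With this correction your argument is complete and in some ways tidier than the paper's induction: it handles multiplicities automatically via $\dim\mathrm{Ran}\,E_H(a,b)$ and does not require tracking how the intermediate spectra partition the interval.
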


\begin{proof}
Assume that $N=1$. In this case $Bf=\sigma(f,f_{1})f_{1}$. For $\lambda
\in(a,b)$ any solution of the equation $Hf-\lambda f=0$ can be written in the
form $f=-\sigma(f,f_{1})R_{\lambda}f_{1}$ where $R_{\lambda}=(A-\lambda)^{-1}%
$. Taking inner product in both parts of this equation we get $(f,f_{1}%
)=-\sigma(f,f_{1})(R_{\lambda}f_{1},f_{1})$ or $\sigma(R_{\lambda}f_{1}%
,f_{1})+1=0$. Since the function $\lambda\rightarrow(R_{\lambda}f_{1},f_{1})$
is strictly increasing the equation $\sigma(R_{\lambda}f_{1},f_{1})+1=0$ has
at most one solution in the interval $(a,b)$. This solution is eigenvalue of
the operator $H$.

Assume that the statement holds true for $N=k$, and let $\lambda_{1}%
\leq...\leq\lambda_{k}$ be the corresponding eigenvalues in the interval
$(a,b)$. The numbers $\lambda_{i}$ split the interval $(a,b)$ in at most $k+1$
open intervals $I_{i}$ each of which does not intersect the spectrum of the
operator $A+\sum_{i=1}^{k}\sigma_{i}(f,f_{i})f_{i}$. Let us consider the
operator $H=A+\sum_{i=1}^{k+1}\sigma_{i}(f,f_{i})f_{i}$ and write
$H=A^{\prime}+\sigma_{k+1}(f,f_{k+1})f_{k+1}$. By what we have proved above
each of the $k+1$ open intervals $I_{i}$ contains at most one eigenvalue of
the operator $H$, i.e. $H$ contains in $(a,b)$ at most $k+1$ eigenvalues. The
proof is finished.
\end{proof}

\begin{example}
Consider the case: $\sigma_{i}=\sigma>0$ and $a_{i}$ fulfill the whole ball
$B_{0}\subset X$, i.e. $V(x)=\sigma1_{B_{0}}(x)$. Assume that $B_{0}$ belongs
to the horocycle $\mathcal{H}_{k_{0}}$. Let us select the following three
Hilbert subspaces of $L^{2}(X,m)$:
\end{example}

\begin{itemize}
\item $\mathcal{L}_{+}=\mathrm{span}\{1_{B}:B\in\mathcal{H}_{k_{0}}\}$, the
linear subspace of $L^{2}=L^{2}(X,m)$ spanned by the indicators of balls which
belong to the horocycle $\mathcal{H}_{k_{0}}$,

\item $\mathcal{L}_{-}=L^{2}(X,m)\ominus\mathcal{L}_{+}$, the orthogonal
complement of $\mathcal{L}_{+}$, and

\item $\mathcal{L}_{B}=\mathrm{span}\{f_{T}:T\varsubsetneq B\}$, the linear
space spanned by the eigenfunctions $f_{T}=1_{T}/m(T)-1_{T^{\prime}%
}/m(T^{\prime})$ of the operator $L$ such that $T^{\prime}\subseteq B.$
\end{itemize}

Let $\left\langle H|\mathcal{L}_{+}\right\rangle ,\left\langle H|\mathcal{L}%
_{-}\right\rangle $ and $\left\langle H|\mathcal{L}_{B}\right\rangle $) be the
restriction of the operator $H$ to its invariant subspaces listed above$.$

\textbf{Spectrum of the operator} $\left\langle H|\mathcal{L}_{-}\right\rangle
:$ The system of eigenfunctions $\{f_{T}:T\in\mathcal{B}\}$ is complete in
$L^{2}(X,m)$. Each eigenfunction $f_{T}$ either belongs to $\mathcal{L}_{+}$
or to its orthogonal complement $\mathcal{L}_{-}$ whence
\[
\mathcal{L}_{-}=%
{\displaystyle\bigoplus_{B\in\mathcal{H}_{k_{0}}}}
\mathcal{L}_{B}=\mathcal{L}_{B_{0}}%
{\displaystyle\bigoplus}
\mathcal{L}^{\prime}.
\]
For every ball $B\in\mathcal{H}_{k_{0}}$ we have
\[
\left\langle H|\mathcal{L}_{B}\right\rangle =\left\{
\begin{array}
[c]{cc}%
\left\langle L|\mathcal{L}_{B_{0}}\right\rangle -\sigma, & B=B_{0}\\
\left\langle L|\mathcal{L}_{B}\right\rangle , & B\neq B_{0}%
\end{array}
\right.
\]
whence
\[
\left\langle H|\mathcal{L}_{-}\right\rangle =(\left\langle L|\mathcal{L}%
_{B_{0}}\right\rangle -\sigma)%
{\displaystyle\bigoplus}
\left\langle L|\mathcal{L}^{\prime}\right\rangle
\]
and%
\[
\left\langle L|\mathcal{L}^{\prime}\right\rangle =%
{\displaystyle\bigoplus\limits_{B\neq B_{0}}}
\left\langle L|\mathcal{L}_{B}\right\rangle .
\]
It follows that%
\begin{equation}
Spec(\left\langle H|\mathcal{L}_{-}\right\rangle )=\{\lambda_{k}-\sigma:1\leq
k\leq k_{0}\}\cup\{\lambda_{k}:1\leq k\leq k_{0}\}.
\end{equation}
\textbf{The number }$Neg(\left\langle H|\mathcal{L}_{-}\right\rangle ):$ The
operator $\left\langle H|\mathcal{L}_{-}\right\rangle $ has finite number of
negative eigenvalues. Given $\sigma>\lambda_{k_{0}}$ this set is not empty.
Let us estimate $Neg(\left\langle H|\mathcal{L}_{-}\right\rangle )$, the
number of negative eigenvalues counted with their multiplicity. Assuming that
$\sigma>\lambda_{k_{0}}$\ let us choose the integer $k_{\ast}\leq k_{0}$ such
that%
\begin{equation}
\lambda_{k_{\ast}}<\sigma<\lambda_{k_{\ast}-1}. \label{Neg0}%
\end{equation}
Let $\{n_{k}\}$ be the sequence of forward degrees associated with the tree of
balls $\Upsilon(X)$. We have%
\begin{align*}
Neg(\left\langle H|\mathcal{L}_{-}\right\rangle )  &  =(n_{k_{0}}-1)+n_{k_{0}%
}(n_{k_{0}-1}-1)+...+n_{k_{0}}n_{k_{0}-1}...n_{k_{\ast}+1}(n_{k_{\ast}}-1)\\
&  =n_{k_{0}}\left(  1-\frac{1}{n_{k_{0}}}\right)  +...+\text{ }n_{k_{0}%
}n_{k_{0}-1}...n_{k_{\ast}+1}n_{k_{\ast}}\left(  1-\frac{1}{n_{k_{\ast}}%
}\right)  \text{.}%
\end{align*}
Since all $n_{k}\geq2$ we obtain%
\[
\frac{1}{2}\left(  n_{k_{0}}+...+\text{ }n_{k_{0}}n_{k_{0}-1}...n_{k_{\ast}%
}\right)  <Neg(\left\langle H|\mathcal{L}_{-}\right\rangle )<n_{k_{0}%
}+...+\text{ }n_{k_{0}}n_{k_{0}-1}...n_{k_{\ast}}%
\]
or by the same reasongs%
\begin{equation}
\frac{1}{2}n_{k_{0}}n_{k_{0}-1}...n_{k_{\ast}}<Neg(\left\langle H|\mathcal{L}%
_{-}\right\rangle )<\frac{3}{2}n_{k_{0}}n_{k_{0}-1}...n_{k_{\ast}}.
\label{Neg1}%
\end{equation}
Let $B_{\ast}\subseteq B_{0}$ be a ball in $\mathcal{H}_{k_{\ast}}$, then%
\[
n_{k_{0}}n_{k_{0}-1}...n_{k_{\ast}}=\frac{m(B_{0})}{m(B_{\ast})}%
\]
whence%
\begin{equation}
\frac{1}{2}\frac{m(B_{0})}{m(B_{\ast})}<Neg(\left\langle H|\mathcal{L}%
_{-}\right\rangle )<\frac{3}{2}\frac{m(B_{0})}{m(B_{\ast})}. \label{Neg2}%
\end{equation}
Let us choose an increasing function $\Phi$ such that the eigenvalues of the
operator $L$ can be written in the form $\lambda(B)=\Phi(1/m(B))$, then
equation (\ref{Neg0}) yields
\begin{equation}
\frac{1}{n_{k_{\ast}}}%
{\displaystyle\int\limits_{X}}
\Phi^{-1}\circ V(x)dm(x)<\frac{m(B_{0})}{m(B_{\ast})}<%
{\displaystyle\int\limits_{X}}
\Phi^{-1}\circ V(x)dm(x) \label{Neg3}%
\end{equation}
\textbf{Spectrum of the operator} $\left\langle H|\mathcal{L}_{+}\right\rangle
:$\textbf{ \ }We say that $x\sim y$ if and only if $x$ and $y$ belong to the
same ball $B\in\mathcal{H}_{k_{0}}$. Clearly this is an equivalence relation.
The set $[X]$ of all equivalence classes $[x]$\ equipped with the induced
metric and with the induced measure is a discrete homogeneous ultrametric
measure space. All balls $B\in\mathcal{H}_{k_{0}}$ become singletones $[B]$ in
$[X]$. Let $\tau:x\rightarrow\lbrack x]$ be the canonical mapping of $X$ onto
$[X].$ The mapping $\Upsilon:f\rightarrow f\circ\tau$ maps $L^{2}([X],[m])$
onto $\mathcal{L}_{+}$ isometrically. The operator $\left[  L\right]
=\Upsilon^{-1}\circ\left\langle L|\mathcal{L}_{+}\right\rangle \circ\Upsilon$
acting in $L^{2}([X],[m])$\ is a homogeneous hierarchical Laplacian. Since in
the formula%
\[
\left\langle L|\mathcal{L}_{+}\right\rangle f(x)=\sum\limits_{B\in
\mathcal{B}(x)}C(B)\left(  f(x)-\frac{1}{m(B)}\int\limits_{B}fdm\right)  ,
\]
the sum can be taken over all balls $B\in\mathcal{B}(x)$ each of which belongs
to the horocycle $\mathcal{H}_{k}$ with $k>k_{0}$, the complete list of
eigenvalues of the operator $[L]$ is: $\lambda_{k_{0}+1}>\lambda_{k_{0}%
+2}>\lambda_{k_{0}+3}>...$. The Markov semigroup $(e^{-t[L]})_{t>0}$ is
transient (resp. recurrent) if and only if the Markov semigroup $(e^{-tL}%
)_{t>0}$\ is transient (resp. recurrent).

The operator $\Upsilon^{-1}\circ\left\langle H|\mathcal{L}_{+}\right\rangle
\circ\Upsilon$ acting in $L^{2}([X],[m])$\ coincides with the Schr\"{o}dinger
operator $[H]=[L]-[V]$\ \ where $[V]=\sigma\cdot\delta_{\lbrack B_{0}]}.$ Thus
$Spec(\left\langle H|\mathcal{L}_{+}\right\rangle )=Spec([H])$ and the results
of Proposition \ref{Spectrum I} apply.

\textbf{Spectrum of the operator} $H:$\textbf{ \ }The operator $H=L-\sigma
1_{B_{0}}$\ has purely point spectrum. The set of positive eigenvalues of the
operator $H$ can be splittet in three subsets $\Xi_{1},\Xi_{2}$ and $\Xi_{3}$:
$\Xi_{1}$ consists of eigenvalues $\lambda_{k\text{ }}$of the operator $L$,
$\Xi_{2}=\{\lambda_{k\text{ }}-\sigma:k\leq k_{\ast}\}$ where $k_{\ast}%
=\max\{k\leq k_{0}:$ $\lambda_{k\text{ }}-\sigma>0\}$ and $\Xi_{3}$ consists
of eigenvalues $\lambda_{k}^{\sigma}$, $k\geq k_{0}$, each lying in the open
interval $(\lambda_{k+1},\lambda_{k})$. The set $\Xi_{-}$ of negative
eigenvalues of the operator $H$ consists of eigenvalues $\lambda_{k}-\sigma$,
$k_{\ast}<k\leq k_{0}$, and the eigenvalue $\lambda_{-}^{\sigma}$ described in
Proposition \ref{Spectrum I}. Each eigenvalue in $\Xi_{1}$ has infinite
multiplicity, the eigenfunctions in $\Xi_{2},\Xi_{3}$ and $\Xi_{-}$ have
finite multiplicity. The eigenfunctions corresponding to $\Xi_{1},\Xi_{2}$ and
$\Xi_{-}\backslash\{\lambda_{-}^{\sigma}\}$ are compactly supported, the
eigenfunctions corresponding $\Xi_{3}$ and $\lambda_{-}^{\sigma}$ have full support.

\textbf{The number }$Neg(H):$ The operator $H$ has finite number of negative
eigenvalues. Evidently this set is not empty if, for instance, $\sigma
>\lambda_{k_{0}}$. Assuming that $\sigma>\lambda_{k_{0}}$ we estimate
$Neg(H)$, the number of negative eigenvalues counted with their multiplicity:
\begin{equation}
Neg(H)>\frac{1}{2n_{k_{\ast}}}%
{\displaystyle\int\limits_{X}}
\Phi^{-1}\circ V(x)dm(x), \label{Neg(H)-phi1}%
\end{equation}
where $k_{\ast}$ is defined by equation (\ref{Neg0}), and%
\begin{equation}
Neg(H)<1+\frac{3}{2}%
{\displaystyle\int\limits_{X}}
\Phi^{-1}\circ V(x)dm(x). \label{Neg(H)-phi2}%
\end{equation}

\begin{proposition}
Assume that $V=\sum_{i=1}^{N}\sigma_{i}\delta_{a_{i}}$\ and that all
$\sigma_{i}>0$ are different. For any $\delta$ large enough there exists
$k(\delta)$ such that $\min_{i\neq j}d(a_{i},a_{j})>\delta$ implies that the
operator $H$ has precisely $N$ different eigenvalues in each open interval
$]\lambda_{k+1},\lambda_{k}[$: $1\leq k\leq k(\delta)$. Moreover there exists
precisely $N$ negative eigenvalues in the following two cases:
\end{proposition}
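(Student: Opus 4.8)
The plan is to turn the eigenvalue problem inside a spectral gap into a scalar equation. Write $H=L-V$, $V=\sum_{i=1}^{N}\sigma_{i}\delta_{a_{i}}$. If $H\psi=\lambda\psi$ with $\lambda$ in a gap $]\lambda_{k+1},\lambda_{k}[$ or in $]-\infty,0[$, then $(L-\lambda)\psi=\sum_{i}\sigma_{i}\psi(a_{i})\delta_{a_{i}}$, so $\psi=\sum_{i}\sigma_{i}\psi(a_{i})\mathcal{R}(\lambda,\cdot,a_{i})$; since $\lambda\notin\mathrm{Spec}(L)$ a nonzero such $\psi$ cannot vanish at all the $a_{i}$, so evaluating at $a_{j}$ and putting $d_{i}=\sqrt{\sigma_{i}}\,\psi(a_{i})$ gives $d=K(\lambda)d$ with $K(\lambda)$ the symmetric matrix $K_{ij}(\lambda)=\sqrt{\sigma_{i}\sigma_{j}}\,\mathcal{R}(\lambda,a_{i},a_{j})$. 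The map $\psi\mapsto(\psi(a_{i}))_{i}$ identifies the $\lambda$-eigenspace of $H$ with $\ker(I-K(\lambda))$ (the functions $\mathcal{R}(\lambda,\cdot,a_{i})$ are independent because the $\delta_{a_{i}}$ are), so the eigenvalues of $H$ in the gap are exactly the zeros of $F(\lambda):=\det(I-K(\lambda))$, with the right multiplicities. Since $-V$ has rank $N$ and both $]\lambda_{k+1},\lambda_{k}[$ and $]-\infty,0[$ avoid $\mathrm{Spec}(L)$, Lemma~\ref{perturbation of finite rank} gives at most $N$ such zeros; it therefore suffices to produce $N$ distinct zeros of $F$.

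By homogeneity $\mathcal{R}(\lambda,a_{i},a_{i})=\phi(\lambda)$ is independent of $i$, and by \eqref{Resolvent poles} the function $\phi$ is continuous and strictly increasing on $]\lambda_{k+1},\lambda_{k}[$, maps that interval onto $\mathbb{R}$, and maps $]-\infty,0[$ onto $]0,\mathcal{R}(0,a,a)[$. Splitting $I-K(\lambda)$ into its diagonal part $I-\phi(\lambda)\Sigma$ ($\Sigma=\mathrm{diag}(\sigma_{i})$) and its off-diagonal part, and expanding the determinant over permutations — every non-identity permutation contributing at least two off-diagonal entries — one obtains
\[
F(\lambda)=\prod_{i=1}^{N}\bigl(1-\sigma_{i}\phi(\lambda)\bigr)+R(\lambda),\qquad |R(\lambda)|\le C\,\max_{i\ne j}|\mathcal{R}(\lambda,a_{i},a_{j})|^{2},
\]
the constant $C$ depending only on $N$, on $\sigma_{1},\dots,\sigma_{N}$, and on an a priori bound for $\phi$ at the points where $F$ is evaluated. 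So everything reduces to making the off-diagonal kernels $\mathcal{R}(\lambda,a_{i},a_{j})$, $i\ne j$, uniformly small.

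Expanding $\mathcal{R}(\lambda,\cdot,a_{j})$ along the geodesic of balls issuing from $a_{j}$ and writing $B_{ij}=a_{i}\curlywedge a_{j}$, a telescoping estimate yields $|\mathcal{R}(\lambda,a_{i},a_{j})|\le 2\,[\,m(B_{ij})(\lambda-\lambda(B_{ij}))\,]^{-1}$ when $\lambda>\lambda(B_{ij})$, and $|\mathcal{R}(\lambda,a_{i},a_{j})|\le 2\,[\,m(B_{ij})|\lambda|\,]^{-1}$ when $\lambda<0$. If $\min_{i\ne j}d(a_{i},a_{j})>\delta$ then each $B_{ij}$ has diameter $>\delta$, hence $m(B_{ij})\ge v(\delta)$ and $\lambda(B_{ij})\le\bar\lambda(\delta)$, where $v(\delta)=m(B_{\delta})\to\infty$ and $\bar\lambda(\delta)=\lambda(B_{\delta})\to0$, $B_{\delta}$ being the smallest ball of diameter $>\delta$ (and $v(\delta)\to\infty$ because a countably infinite homogeneous space has $m(X)=\infty$). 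Now \emph{choose} $k(\delta)$ to be the largest $k$ with $\lambda_{k+1}\ge\max\{2\bar\lambda(\delta),\,v(\delta)^{-1/2}\}$; then $k(\delta)\to\infty$ as $\delta\to\infty$, and for $1\le k\le k(\delta)$ and $\lambda\in\,]\lambda_{k+1},\lambda_{k}[$ one has $\lambda-\bar\lambda(\delta)\ge\lambda_{k+1}/2\ge\tfrac12 v(\delta)^{-1/2}$, so $\max_{i\ne j}|\mathcal{R}(\lambda,a_{i},a_{j})|\le 4\,v(\delta)^{-1/2}=:\eta(\delta)\to0$ uniformly over all these $k$ and $\lambda$.

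Fix $I_{0}=[\,(2\max_{i}\sigma_{i})^{-1},\,2(\min_{i}\sigma_{i})^{-1}]$, which contains every $1/\sigma_{i}$ in its interior, and once and for all pick points $s_{0}<\dots<s_{N}$ in $\mathrm{int}\,I_{0}$ interlacing the roots $1/\sigma_{(1)}<\dots<1/\sigma_{(N)}$, so that $\tilde{G}(s):=\prod_{i}(1-\sigma_{i}s)$ alternates sign along $s_{0},\dots,s_{N}$ with $\min_{l}|\tilde{G}(s_{l})|=:c_{0}>0$ (both $c_{0}$ and the $s_{l}$ depending only on the $\sigma_{i}$). In a gap $]\lambda_{k+1},\lambda_{k}[$ with $k\le k(\delta)$, let $\mu_{l}$ be the unique point of the gap with $\phi(\mu_{l})=s_{l}$; then $\mu_{0}<\dots<\mu_{N}$, $\phi(\mu_{l})$ is bounded by $\max I_{0}$, and the two displays give $|F(\mu_{l})-\tilde{G}(s_{l})|\le C(N,(\sigma_{i}))\,\eta(\delta)^{2}$, which is $<c_{0}$ once $\delta$ is large; hence $F$ changes sign on each of the $N$ disjoint intervals $]\mu_{l-1},\mu_{l}[$, so $F$ has $N$ distinct zeros in the gap, and together with Lemma~\ref{perturbation of finite rank} exactly $N$ distinct (simple) eigenvalues of $H$ there. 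The same reasoning on $]-\infty,0[$ produces the $N$ negative eigenvalues: in case~(i) $\mathcal{R}(0,a,a)=+\infty$, so all $\mu_{l}$ exist and remain in a compact subinterval of $]-\infty,0[$ bounded away from $0$, whence $\max_{i\ne j}|\mathcal{R}(\mu_{l},a_{i},a_{j})|\le C/v(\delta)\to0$; in case~(ii) one first shrinks $I_{0}$ to a subinterval still containing every $1/\sigma_{i}$ and lying inside $]0,\mathcal{R}(0,a,a)[$ — this is exactly where the hypothesis that the $\sigma_{i}$ be large enough is used — and then invokes transience, the Green kernel $\mathcal{R}(0,a_{i},a_{j})$ of \eqref{Green function} tending to $0$ as $d(a_{i},a_{j})\to\infty$. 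The delicate point, and the main obstacle, is precisely the choice of $k(\delta)$: it must diverge with $\delta$ yet be kept small enough that $\lambda_{k(\delta)+1}$ does not decay faster than $v(\delta)^{-1/2}$, since otherwise the off-diagonal kernels need not be uniformly small on the gaps in question; this is where the transient/recurrent dichotomy would otherwise intrude — in the recurrent case $\bar\lambda(\delta)\,v(\delta)\not\to\infty$, so one cannot afford $k(\delta)$ as large as the constraint $\lambda_{k(\delta)+1}\ge 2\bar\lambda(\delta)$ alone would permit — and the two-sided cutoff above is what makes the argument uniform.
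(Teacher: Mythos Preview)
Your argument is correct and shares with the paper the same core reduction: write $(L-\lambda)\psi=V\psi$, evaluate at the $a_{i}$, and obtain a finite linear system whose determinant vanishes exactly at the eigenvalues of $H$ in the gap; then control the off--diagonal resolvent entries $\mathcal{R}(\lambda,a_{i},a_{j})$ by the explicit formula along the geodesic of balls, which makes them small once the $a_{i}$ are far apart. Where you diverge from the paper is in the endgame. The paper rewrites the determinantal equation as $\det(\mathfrak{A}(\lambda)-z(\lambda)\mathrm{E})=0$ with $z(\lambda)=\mathcal{R}(\lambda,a,a)$ and $\mathfrak{A}$ a symmetric matrix with diagonal $1/\sigma_{i}$ and small off--diagonal; it then invokes the Gershgorin circle theorem to separate the $N$ eigenvalues $\mathfrak{a}_{i}(\lambda)$ of $\mathfrak{A}$ near the $1/\sigma_{i}$, appeals to analytic perturbation theory (Reed--Simon, Theorem~XII.1) for their $\lambda$--dependence, and intersects each bounded curve $\mathfrak{a}_{i}(\lambda)$ with the monotone surjection $z(\lambda)$ to produce $N$ solutions. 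You instead expand $\det(I-K(\lambda))$ directly by Leibniz, isolate the diagonal product $\prod_{i}(1-\sigma_{i}\phi(\lambda))$ with an $O(\eta^{2})$ remainder, choose once and for all interlacing test points $s_{0}<\cdots<s_{N}$ at which this product alternates sign, pull them back through the monotone bijection $\phi$, and apply the intermediate value theorem. Your route is more elementary (no Gershgorin, no analytic perturbation theory) and you are more explicit on two points the paper leaves implicit: a concrete choice of $k(\delta)$ via the two--sided cutoff $\lambda_{k(\delta)+1}\ge\max\{2\bar\lambda(\delta),v(\delta)^{-1/2}\}$, which guarantees uniform smallness of the off--diagonal kernels across all relevant gaps, and a full treatment of the negative--eigenvalue clause in both the recurrent and the transient case. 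The paper's matrix--eigenvalue viewpoint, on the other hand, makes the localisation of each new eigenvalue near the rank--one value $1/\sigma_{i}$ more transparent.
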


\begin{description}
\item[$(i)$] The semigroup $(e^{-tL})_{t>0}$ is recurrent.

\item[$(ii)$] The semigroup $(e^{-tL})_{t>0}$ is transient and $\sigma
_{i}>1/\mathcal{R}(0,a,a)$.
\end{description}

\begin{proof}
Let $\psi(x):=\mathcal{R}(\lambda,x,y)$ be the solution of the equation
$L\psi(x)-\lambda\psi(x)=\delta_{y}(x)$, and $\psi_{V}(x):=\mathcal{R}%
_{V}(\lambda,x,y)$ be the solution of the equation $H\psi_{V}(x)-\lambda
\psi_{V}(x)=\delta_{y}(x)$. To find the perturbed resolvent $\mathcal{R}%
_{V}(\lambda,x,y)$ we write
\begin{align*}
L\psi_{V}(x)-\lambda\psi_{V}(x)  &  =\delta_{y}(x)+\sum_{i=1}^{N}\sigma
_{j}\delta_{a_{j}}(x)\psi_{V}(x)\\
&  =\delta_{y}(x)+\sum_{j=1}^{N}\sigma_{j}\psi_{V}(a_{j})\delta_{a_{j}}(x),
\end{align*}
or%
\begin{equation}
\psi_{V}(x)=\mathcal{R}(\lambda,x,y)+\sum_{j=1}^{N}\sigma_{j}\psi_{V}%
(a_{j})\mathcal{R}(\lambda,x,a_{j})
\end{equation}
Choosing in the above equation $x=a_{1},x=a_{2},...,x=a_{N}$ we obtain system
of $N$ linear equations with $N$ variables $\xi_{i}=\psi_{V}(a_{i}),$%
\[
\xi_{i}=\mathcal{R}(\lambda,a_{i},y)+\sum_{j=1}^{N}\sigma_{j}\mathcal{R}%
(\lambda,a_{i},a_{j})\xi_{j},\text{ \ }i=1,2,...,N,
\]
or, in the vector form,%
\begin{equation}
(\mathrm{E}-\mathfrak{R}(\lambda)\Theta)\xi=\mathfrak{R}(\lambda,y),
\end{equation}
where we use the following notation $\xi=(\xi_{i})_{i=1}^{N},$ $\mathfrak{R}%
(\lambda,y)=(\mathcal{R}(\lambda,a_{i},y))_{i=1}^{N}$, \ $\mathfrak{R}%
(\lambda)=(\mathcal{R}(\lambda,a_{i},a_{j}))_{i,j=1}^{N}$, \textrm{E}%
$=(\delta_{ij})_{i,j=1}^{N}$ and \ $\Theta=\mathrm{diag}(\sigma_{i})$.

Let us substitute $\xi_{i}=\mathcal{R}_{V}(\lambda,a_{i},y)$ and
$\mathfrak{R}_{V}(\lambda,y):=(\mathcal{R}_{V}(\lambda,a_{i},y))_{i=1}^{N}$ in
the above equation
\begin{equation}
(\mathrm{E}-\mathfrak{R}(\lambda)\Theta)\mathfrak{R}_{V}(\lambda
,y)=\mathfrak{R}(\lambda,y). \label{Krein IV}%
\end{equation}
Choosing in equation (\ref{Krein IV}) $y=a_{1},y=a_{2},...,y=a_{N}$ and
setting $\mathfrak{R}_{V}(\lambda)=(\mathcal{R}_{V}(\lambda,a_{i}%
,a_{j}))_{i,j=1}^{N}$ we get the following matrix equation%
\[
(\mathrm{E}-\mathfrak{R}(\lambda)\Theta)\mathfrak{R}_{V}(\lambda
)=\mathfrak{R}(\lambda)
\]
or equivalently%
\[
(\mathrm{E}-\mathfrak{R}(\lambda)\Theta)\mathfrak{R}_{V}(\lambda
)\Theta=\mathfrak{R}(\lambda)\Theta.
\]
Similarly, since $H=L+\sum_{i=1}^{N}(-\sigma_{i})\delta_{a_{i}}$, we get%
\[
(\mathrm{E}+\mathfrak{R}_{V}(\lambda)(-\Theta))\mathfrak{R}(\lambda
)(-\Theta)=\mathfrak{R}_{V}(\lambda)(-\Theta).
\]
It follows that

$(i)$ the matrices $\mathfrak{R}(\lambda)\Theta$ and $\mathfrak{R}_{V}%
(\lambda)\Theta$ commute, and

$(ii)$ $\lambda\in Spec(H)\setminus Spec(L)$ if and only if $\lambda$
satisfies the equation%
\begin{equation}
\det(\mathrm{E}-\mathfrak{R}(\lambda)\Theta)=0. \label{ch. eq. I}%
\end{equation}
Observe that the variable $z:=\mathcal{R}(\lambda,a_{i},a_{i})$ does not
depend on $i,$ and its range is the whole interval $]-\infty,\infty\lbrack$
when $\lambda$ takes values in each of open interval $]\lambda_{k+1}%
,\lambda_{k}[.$ Equation (\ref{ch. eq. I}) can be written as characteristic
equation
\begin{equation}
\det(\mathfrak{A}-z\mathrm{E})=0 \label{ch. eq. II}%
\end{equation}
where $\mathfrak{A}=(\mathfrak{a}_{ij})_{i,j=1}^{N}$ is symmetric $N\times N$
matrix with entries $\mathfrak{a}_{ii}=1/\sigma_{i}$ and $\mathfrak{a}%
_{ij}=-\mathcal{R}(\lambda,a_{i},a_{j})$ for $i\neq j$. Thus all solutions of
equation (\ref{ch. eq. II}) are eigenvalues of the matrix $\mathfrak{A}$.

Let us compute $\mathcal{R}(\lambda,a_{i},a_{j})$. For any two neighboring
balls $B\subset B^{\prime}$ let us denote
\[
A(B)=\frac{1}{m(B)}-\frac{1}{m(B^{\prime})}.
\]
Let $a_{i}\curlywedge a_{j}$ be the minimal ball which contains both $a_{i}$
and $a_{j}$. Following the same line of reasongs as in the proof of equation
(\ref{Resolvent poles}) we obtain
\[
\mathcal{R}(\lambda,a_{i},a_{i})=%
{\displaystyle\sum\limits_{B:\text{ }a_{i}\in B}^{\infty}}
\frac{A(B)}{\lambda(B)-\lambda}%
\]
and
\[
\mathcal{R}(\lambda,a_{i},a_{j})=-\frac{m(a_{i}\curlywedge a_{j})^{-1}%
}{\lambda(a_{i}\curlywedge a_{j})-\lambda}+%
{\displaystyle\sum\limits_{B:\text{ }a_{i}\curlywedge a_{j}\subset B}}
\frac{A(B)}{\lambda(B)-\lambda}.
\]
Let us assume that $\min_{i\neq j}d(a_{i},a_{j})>\sigma>>1.$ Then $\max
\lambda(a_{i}\curlywedge a_{j})=\lambda_{k(\delta)+1}$ for some $k(\delta
)>>1$. For all $\lambda\geq\lambda_{k(\delta)}$ and all $i\neq j$ we get%
\[
\left\vert \mathcal{R}(\lambda,a_{i},a_{j})\right\vert <\frac{2m(a_{i}%
\curlywedge a_{j})^{-1}}{\lambda_{k(\delta)+1}-\lambda_{k(\delta)}}\leq
\frac{2\max\{m(a_{i}\curlywedge a_{j})^{-1}\}}{\lambda_{k(\delta)+1}%
-\lambda_{k(\delta)}}.
\]
Denote the left-hand side of the above inequality by $\varepsilon(\delta)/N$
and observe that this quantity tend to zero as $\delta\rightarrow\infty$. Let
us choose $\delta$ big enough so that the intervals $\{s:\left\vert
1/\sigma_{i}-s\right\vert \leq\varepsilon(\delta)\}$ do not intersect. By
Gershgorin Circle Theorem the matrix $\mathfrak{A}$ admits $N$ different
eigenvalues $\mathfrak{a}_{i}$ each of which lies in the corresponding open
interval $\{s:\left\vert 1/\sigma_{i}-s\right\vert <\varepsilon(\delta)\}.$
The eigenvalues $\mathfrak{a}_{i}$ are analytic functions of $\lambda$ in each
open interval $]\lambda_{k+1},\lambda_{k}[$, $1\leq k\leq k(\delta)$, see
\cite[Theorem XII.1]{ReedSimon}. Whence in each of these intervals the number
of different solutions of the equation $\mathfrak{a}_{i}=\mathcal{R}%
(\lambda,a_{i},a_{i})$ is at least $N$. Lemma
\ref{perturbation of finite rank} says that the number of different solutions
is at most $N$. Thus the number of different solutions is precisely $N$ as desired.
\end{proof}

\section{Perfect ultrametric spaces}

Let us consider a homogeneous ultrametric measure space $(X,d,m)$ which is
also assumed to be\emph{ non-compact}, \emph{perfect and proper}. Recall that
the last two properties mean: $X$ \ has no isolated points, and all closed
balls are compact sets.

\subsection{Potentials of finite rank}

Let $L$ be\ a homogeneous hierarchical Laplacian $L$. Let us fix a horocycle
$\mathcal{H}=\mathcal{H}_{k}$ and define $V(x)=\sum_{B\in\mathcal{H}}%
\sigma(B)1_{B}(x)$.\ We say that $V(x)$ is of\emph{ rank }$k$ \emph{function}.

Let $Hu(x)=Lu(x)-Vu(x)$ be the Schr\"{o}dinger operator with potential $V(x)$.
By Theorem \ref{Schroedinger spectrum}$(i)$, symmetric operator
$(H,\mathcal{D)}$ is essentially self-adjoint. Notice that (in contrary to the
case when the phase space is discrete) the operator $(L,\mathcal{D)}$ is
indeed \emph{unbounded} symmetric operator.

Among invariant subspaces of the operator $(H,\mathcal{D)}$ we select the
following three Hilbert spaces:

\begin{itemize}
\item $\mathcal{L}_{+}=\mathrm{span}\{1_{B}:B\in\mathcal{H}\}$, the linear
subspace of $L^{2}(X,m)$ spanned by the indicators of balls which belong to
the horocycle $\mathcal{H}$,

\item $\mathcal{L}_{-}=L^{2}\ominus\mathcal{L}_{+}$, the orthogonal complement
of $\mathcal{L}_{+}$, and

\item $\mathcal{L}_{B}=\mathrm{span}\{f_{T}:T\varsubsetneq B\}$, the linear
space spanned by the eigenfunctions $f_{T}=1_{T}/m(T)-1_{T^{\prime}%
}/m(T^{\prime})$ of the operator $L$ such that $T^{\prime}\subseteq B.$ The
space $\mathcal{L}_{B}$ is indeed the invariant subspace of $H.$
\end{itemize}

Each of the invariant subspaces spaces $\mathcal{L}_{+}$, $\mathcal{L}_{-}$
and $\mathcal{L}_{B}$ reduces the operators $L$ and $H$. We denote
$\left\langle L|\mathcal{L}_{+}\right\rangle $, $\left\langle H|\mathcal{L}%
_{+}\right\rangle $ etc. the restriction of the operators $L$ and $H$ to these
invariant subspaces.

\paragraph{Spectrum of the operator $\left\langle H|\mathcal{L}_{-}%
\right\rangle $}

The system of functions $\{f_{T}:T\in\mathcal{B}\}$ is complete in
$L^{2}(X,m)$. Since each $f_{T}$ either belongs to $\mathcal{L}_{+}$ or to its
orthogonal complement $\mathcal{L}_{-}$, we write
\[
\mathcal{L}_{-}=%
{\displaystyle\bigoplus\limits_{B\in\mathcal{H}}}
\mathcal{L}_{B}.
\]
For any ball $B\in\mathcal{H}$ we have
\[
\left\langle H|\mathcal{L}_{B}\right\rangle =\left\langle L|\mathcal{L}%
_{B}\right\rangle -\sigma(B),
\]
whence
\[
\left\langle H|\mathcal{L}_{-}\right\rangle =%
{\displaystyle\bigoplus\limits_{B\in\mathcal{H}}}
(\left\langle L|\mathcal{L}_{B}\right\rangle -\sigma(B)).
\]
Let us set $\mathfrak{S}(\mathcal{H})=Spec\left\langle L|\mathcal{L}%
_{B}\right\rangle $, then the above equation yield%
\begin{equation}
Spec\left\langle H|\mathcal{L}_{-}\right\rangle =%
{\displaystyle\bigcup\limits_{B\in\mathcal{H}}}
\{\mathfrak{S}(\mathcal{H})-\sigma(B)\}. \label{Spectrum on L_}%
\end{equation}

\paragraph{Spectrum of the operator $\left\langle H|\mathcal{L}_{+}%
\right\rangle $}

Consider the equivalence relation: $x\sim y$ if and only if $x$ and $y$ belong
to the same ball $B\in\mathcal{H}$. The set $[X]$ of all equivalence classes
\ equipped with the induced ultrametric $[d]$ and with the induced measure
$[m]$ is a discrete homogeneous ultrametric measure space$.$Without loss of
generality we may assume that $m(B)=1$, then the measure of all singletones in
$[X]$ is equal to $1$. Let $\tau:x\rightarrow\lbrack x]$ be the canonical
mapping of $X$ onto $[X].$ The mapping $\Upsilon:f\rightarrow f\circ\tau$ maps
$L^{2}([X],[m])$ onto $\mathcal{L}_{+}$ isometrically.

For any function $f(x)$ which takes constant values on the balls in
$\mathcal{H}$ we have%
\[
Lf(x)==\sum\limits_{[x]\subset B}C(B)\left(  f(x)-\frac{1}{m(B)}%
\int\limits_{B}fdm\right)
\]
where $[x]$ is the unique ball in $\mathcal{H}$\ (the equivalence class) which
contains $x$. The operator $\left[  L\right]  =\Upsilon^{-1}\circ\left\langle
L|\mathcal{L}_{+}\right\rangle \circ\Upsilon$ \ is a homogeneous hierarchical
Laplacian acting on the discrete homogeneous ultrametric measure space
$([X],[d],[m])$. The Markov semigroup $(e^{-t[L]})_{t>0}$ is transient (resp.
recurrent) if and only if the semigroup $(e^{-tL})_{t>0}$\ is transient (resp. recurrent).

The operator $\Upsilon^{-1}\circ\left\langle H|\mathcal{L}_{+}\right\rangle
\circ\Upsilon$ acting in $L^{2}([X],[m])$\ coincides with the Schr\"{o}dinger
operator $[H]=[L]-[V]$\ \ with potential $[V]=\sum\sigma(B)\cdot
\delta_{\lbrack B]}$. In particular, we obtain the following result%
\[
Spec\left\langle H|\mathcal{L}_{+}\right\rangle =Spec[H].
\]
Summing all the above we come to the following conclusion

\begin{proposition}
\label{Spectrum+}Assume that $V(x)\rightarrow0$ as $x\rightarrow$ $\infty$.
Then the spectrum of the operator $H=L-V$ is pure point with possibly
accumulating points $\lambda_{k}$, the eigenvalues of the operator $L$.

Assume that $\lambda(B)=\Phi(1/m(B))$ for some increasing homeomorphism
$\Phi:\mathbb{R}_{+}\rightarrow\mathbb{R}_{+}$, and for all balls $B$, then
\[
Neg(H)\geq C_{1}%
{\displaystyle\int_{\{x:\text{ }V(x)>\lambda_{\mathcal{\ast}}\}}}
\Phi^{-1}\circ V(x)dm(x)
\]
for some constant $C_{1}>0$, and for $\lambda_{\mathcal{\ast}}-$ the
eigenvalue corresponding to balls in $\mathcal{H}$.

Assume that the Markov semigroup $(e^{-tL})_{t>0}$ is transient and that both
$\Phi$ and $\Phi^{-1}$ are doubling, then%
\[
Neg(H)\leq C_{2}%
{\displaystyle\int_{X}}
\Phi^{-1}\circ V_{+}(x)dm(x)
\]
for some constant $C_{2}$, and $V_{+}(x)=\max\{0,V(x)\}$.
\end{proposition}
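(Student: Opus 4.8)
The plan is to combine the two orthogonal-decomposition pieces already set up, namely $\langle H|\mathcal{L}_{+}\rangle$ and $\langle H|\mathcal{L}_{-}\rangle$, together with the splitting $X=\{x:V(x)>\lambda_{\ast}\}\cup\{x:V(x)\le\lambda_{\ast}\}$. For the first assertion (pure point spectrum with accumulation only at the $\lambda_k$), I would invoke Theorem \ref{Schroedinger spectrum}$(iii)$: since $V(x)\to 0$ at infinity the essential spectrum of $H$ equals that of $L$, which is $\{0\}\cup\{\lambda_k\}$, and away from this closed set the spectrum is discrete; combined with essential self-adjointness (Theorem \ref{Schroedinger spectrum}$(i)$) and the fact that $L$ has pure point spectrum, one gets that $H$ is pure point with possible accumulation points exactly the $\lambda_k$ and $0$.

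For the lower bound on $Neg(H)$ I would argue as in the worked Example preceding the Proposition but now with a general rank-$k$ potential. Restricting $H$ to $\mathcal{L}_{-}=\bigoplus_{B\in\mathcal{H}}\mathcal{L}_B$ gives $\langle H|\mathcal{L}_{-}\rangle=\bigoplus_{B\in\mathcal{H}}(\langle L|\mathcal{L}_B\rangle-\sigma(B))$. For each ball $B$ on which $\sigma(B)=V|_B>\lambda_{\ast}$, the operator $\langle L|\mathcal{L}_B\rangle$ has eigenvalues $\lambda_{k_0+1}>\lambda_{k_0+2}>\dots$ with the usual multiplicities coming from the forward degrees, so $\langle L|\mathcal{L}_B\rangle-\sigma(B)$ contributes at least of order $m(B)/m(B_{\ast}(B))$ negative eigenvalues, where $B_{\ast}(B)\subseteq B$ is a sub-ball at the horocycle level determined by $\lambda_{k_{\ast}(B)}<\sigma(B)<\lambda_{k_{\ast}(B)-1}$, exactly as in inequalities (\ref{Neg1})--(\ref{Neg2}) of the Example. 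Writing $\lambda(B)=\Phi(1/m(B))$ and using (\ref{Neg0})--(\ref{Neg3}) applied ball by ball, $m(B)/m(B_{\ast}(B))\asymp\int_B\Phi^{-1}\circ V\,dm$ up to the bounded factor $n_{k_{\ast}(B)}$; summing over all $B\in\mathcal{H}$ with $\sigma(B)>\lambda_{\ast}$ and using $Neg(H)\ge Neg(\langle H|\mathcal{L}_{-}\rangle)$ yields the claimed lower bound $Neg(H)\ge C_1\int_{\{V>\lambda_{\ast}\}}\Phi^{-1}\circ V\,dm$.

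For the upper bound, I would pass to $V_+$ and use the general CLR-type estimate from Theorem \ref{Schroedinger spectrum}$(iv)$: under transience and the doubling hypotheses on $\Phi$ and $\Phi^{-1}$, the heat kernel satisfies $p(t,x,x)\asymp\Phi^{-1}(1/t)$ (this is (\ref{J-bounds I}) / (\ref{Spectral asympt.})), so $\int_\tau^\infty p(t,x,x)\,dt\asymp\tau\Phi^{-1}(1/\tau)$ at infinity, and inequality (\ref{Neg(H)_leq_1}) gives $Neg(H)\le C_2\int_X\Phi^{-1}\circ V_{+}\,dm$ directly, since replacing $V$ by $V_+\ge V_-$ only increases the right-hand side and $Neg(L-V)$ is monotone in the negative part of the potential. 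The main obstacle is bookkeeping in the lower bound: one must check that the eigenvalue-counting identity of the Example (which was stated for a single ball $B_0$) genuinely localizes to each $B\in\mathcal{H}$ independently — this is exactly why the direct-sum decomposition $\mathcal{L}_{-}=\bigoplus_{B\in\mathcal{H}}\mathcal{L}_B$ with each summand reducing $H$ is essential — and that the factor $1/n_{k_{\ast}(B)}$ absorbed into $C_1$ stays bounded below, which holds because all forward degrees $n_k\ge 2$ while the sub-ball $B_{\ast}(B)$ cannot be smaller than the level dictated by $\Phi^{-1}\circ V$ on $B$.
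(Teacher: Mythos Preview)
Your approach mirrors the paper's almost exactly: Theorem~\ref{Schroedinger spectrum}$(iii)$ for the first assertion, the ball-by-ball counting of negative eigenvalues on $\mathcal{L}_{-}$ via (\ref{Neg1})--(\ref{Neg3}) for the lower bound (the paper simply cites (\ref{Neg(H)-phi1})), and the CLR-type inequality (\ref{Neg(H)_leq_1}) together with $p(t,x,x)\asymp\Phi^{-1}(1/t)$ for the upper bound.

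There is one slip at the end of your lower-bound discussion. You say that $1/n_{k_{\ast}(B)}$ stays bounded below ``because all forward degrees $n_k\ge 2$.'' The inequality $n_k\ge 2$ gives an \emph{upper} bound $1/n_k\le 1/2$, not a lower bound; what you actually need is a uniform upper bound on the forward degrees, i.e.\ $n_k\le c$ for all $k$, equivalently $m(B')\le c\,m(B)$ for all neighboring balls $B\subset B'$. This is exactly the structural hypothesis appearing in Section~2.3 (and automatic in the $\mathbb{Q}_p$ setting where every $n_k=p$), and the paper tacitly relies on it when invoking (\ref{Neg(H)-phi1}). Without such a bound the constant $C_1$ would depend on the potential through $k_{\ast}(B)$ and the inequality as stated could fail. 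So replace your justification with the bounded-degree assumption rather than $n_k\ge 2$.
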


\begin{proof}
Negative eigenvalues $E_{i}$ of $H$ lie below the essential spectrum of $H$.
Hence to compute $E_{i}$ the $\min-\max$ principle applies, see \cite[Theorem
XIII.1]{ReedSimon}. In particular, $E_{i}$ depend monotonically on the
potential $V(x)$, whence without loss of generality one can assume that
$V(x)\geq0$.

As $B\downarrow\{a\}$ the eigenvalue $\lambda(B)=1/\mathrm{diam}_{\ast}(B)$
tend to $\infty$. It follows that the set of negative eigenvalues of the
operator $\left\langle H|\mathcal{L}_{B}\right\rangle $ is finite. In turn,
since $V(x)\rightarrow0$, the set of negative eigenvalues of the operator
$\left\langle H|\mathcal{L}_{-}\right\rangle $ is finite. Hence the proof of
the statement reduces to the discrete setting: $Spec\left\langle
H|\mathcal{L}_{+}\right\rangle =Spec[H]$. Thus, we apply Theorem
\ref{Schroedinger spectrum}$(ii)$ to get the first part of our claim. The
second part of the statement follows from equation (\ref{Neg(H)-phi1}). The
Markov semigroup $(e^{-tL})_{t>0}$ is transient and admits a continuous
transition density $p(t,x,y)$ such that $p(t,x,x)\asymp\Phi^{-1}(1/t)$. Since
we assume that $\Phi^{-1}$ is doubling,
\[
\int_{\tau}^{\infty}p(t,x,x)dt\asymp\tau\Phi^{-1}(1/\tau)\text{ at }\infty
\]
and%
\[
\int_{0}^{1}t^{m}p(t,x,x)dt<\infty\text{ \ for some }m>0.
\]
Thus, inequality (\ref{Neg(H)_leq_1}) applies and we come to the desired
conclusion
\[
Neg(H)\leq C_{2}%
{\displaystyle\int_{X}}
\Phi^{-1}\circ V_{+}(x)dm(x).
\]
The proof is finished.
\end{proof}

\subsection{The operator $H=\mathfrak{D}^{\alpha}-\sigma1_{B}$}

As an example let us consider $X=\mathbb{Q}_{p}$, the ring of $p$-adic numbers
and $\mathfrak{D}^{\alpha}$ the operator of fractional derivative of order
$\alpha$. Choose $\sigma>0$ and $B=\mathbb{Z}_{p},$ the set of $p$-adic
integers. Let $H=\mathfrak{D}^{\alpha}-V$ \ be the Schr\"{o}dinger operator
with potential $V=\sigma1_{B}$.

The eigenvalues of the operator $\mathfrak{D}^{\alpha}$ are numbers%
\[
\lambda(B)=\left(  \frac{p}{m(B)}\right)  ^{\alpha}=p^{-\alpha(k-1)}\text{,
\ }k\in\mathbb{Z},
\]
therefore the operator $\left\langle H|\mathcal{L}_{+}\right\rangle $ has
eigenvalues $\lambda_{k}=p^{-\alpha(k-1)}$, $k=1,2,...$.

Let $B_{0}\subset B_{1}\subset...$ be the infinite geodesic path in the
homogeneous (self-similar) tree $\Upsilon(\mathbb{Q}_{p})$ starting at
$B_{0}=B$ and ending at $\varpi.$ The operator $\left\langle H|\mathcal{L}%
_{+}\right\rangle $ we identify with operator $[H]$ acting on the discrete
lattice $\mathbb{Q}_{p}/\mathbb{Z}_{p}$ which can be identified with the set
$\mathbb{N}$ of integers equipped with the family of $p$-adic partitians (a
discrete counterpart of the Dyson's model)$.$ Let us compute the resolvent
$\mathcal{R}(\lambda,[B],[B])$ of the operator $[H]$\ at $\lambda=0$.
Following our computations in Section I we obtain
\[
\mathcal{R}(\lambda,[B],[B])=%
{\displaystyle\sum\limits_{k\geq1}}
\frac{A_{k}}{\lambda_{k}-\lambda}=(p-1)%
{\displaystyle\sum\limits_{k\geq1}}
\frac{1}{p^{k}(\lambda_{k}-\lambda)}.
\]
In particular, $\mathcal{R}(0,[B],[B])=+\infty$ if $\alpha\geq1$, otherwise
\[
\mathcal{R}(0,[B],[B])=\frac{p-1}{p}%
{\displaystyle\sum\limits_{k\geq0}}
\frac{1}{p^{k(1-\alpha)}}=\frac{p-1}{p-p^{\alpha}}.
\]
By Proposition \ref{Spectrum I}, the operator $\left\langle H|\mathcal{L}%
_{+}\right\rangle $ has atmost one negative eigenvalue. It does have a
negative eigenvalue if and only if either $(i)$ $\alpha\geq1$ or $(ii)$
$0<\alpha<1$ and $\sigma>(p-p^{\alpha})(p-1)^{-1}.$

By equation (\ref{Spectrum on L_}), the operator $\left\langle H|\mathcal{L}%
_{-}\right\rangle $ has atmost finite number of negative eigenvalues.
Evidently this set is not empty if $\sigma>\lambda(B)=p^{\alpha}$. To estimate
$Neg\left\langle H|\mathcal{L}_{-}\right\rangle $, the number of negative
eigenvalues counted with their multiplicity, we choose the integer $k_{\ast
}\geq0$ such that%
\begin{equation}
p^{k_{\ast}}\leq\frac{\sigma^{1/\alpha}}{p}<p^{k_{\ast}+1}. \label{k-star}%
\end{equation}
Let $B_{\ast}\subseteq B$ be a ball such that $m(B_{\ast})=p^{-k_{\ast}}$.
According to our choice $\lambda(B_{\ast})$ is the minimal eigenvalue
satisfying $\lambda(B)\leq\lambda(T)<\sigma$. Equations (\ref{Neg2}) and
(\ref{Neg3}) yield%
\[
\frac{1}{2}\frac{m(B)}{m(B_{\ast})}<Neg\left\langle H|\mathcal{L}%
_{-}\right\rangle <\frac{3}{2}\frac{m(B)}{m(B_{\ast})}%
\]
and
\[
\frac{1}{p}%
{\displaystyle\int}
V(x)^{1/\alpha}dm(x)<\frac{m(B)}{m(B_{\ast})}<%
{\displaystyle\int}
V(x)^{1/\alpha}dm(x).
\]
Let us define three subsets of the set $\{(\alpha,\sigma):\alpha
>0,\sigma>0\}:$

\begin{itemize}
\item $\Omega_{1}=\{(\alpha,\sigma):\sigma\leq(p-p^{\alpha})(p-1)^{-1},$

\item $\Omega_{2}=\{(\alpha,\sigma):(p-p^{\alpha})(p-1)^{-1}<\sigma\leq
p^{\alpha}\}$, \ 

\item $\Omega_{3}=\{(\alpha,\sigma):\sigma>p^{\alpha}\}.$
\end{itemize}

Let $Neg(H)$ be the number of negative eigenvalues of the operator $H$ counted
with their multiplicity. Summing all the above we conclude that%
\[
Neg(H)=\left\{
\begin{array}
[c]{ccc}%
0 & \text{if} & (\alpha,\sigma)\in\Omega_{1}\\
1 & \text{if} & (\alpha,\sigma)\in\Omega_{2}%
\end{array}
\right.
\]
and, if $(\alpha,\sigma)\in\Omega_{3}$, then
\[
\frac{1}{2p}%
{\displaystyle\int}
V(x)^{1/\alpha}dm(x)\leq Neg(H)\leq\frac{3}{2}%
{\displaystyle\int}
V(x)^{1/\alpha}dm(x).
\]

\section{The Dyson's dyadic model}

Let us consider $X=\mathbb{\{}0,1,2,...\mathbb{\}}$ equipped with the counting
measure $m.$ The set $\{\Pi_{r}:r=0,1,...\}$ of partitians of $X$ each of
which consists of dyadic intervals $I_{r}=\{l\in X:k2^{r}\leq l<(k+1)2^{r}\}$
induces in the standard way the ultrametric structure - a discrete version of
Dyson's model, as explained in the introduction. We call $r$ the rank of the
interval $I_{r}$. We denote $I_{r}(x)$\ the interval of rank $r$ which
contains the point $x$. Recall that the set of balls in the metric space $X$
coincides with the set of all dyadic intervals.

\bigskip

\textbf{5.1 Potentials of infinite rank }We consider the Schr\"{o}dinger
operator $H=L+V$ with bounded potential $V$ of the form $V(x)=\sum\sigma
_{k}1_{B_{k}}(x)$, the sequence of balls $B_{k}$ is chosen such that the rank
of $B_{k}$ tends to infinity. The choice of the potential $V$ will allow us to
conclude that $Spec_{sc}(H)$, the singular continuous part of the spectrum of
$H$, is not empty set. Here $L$ is the Dyson's hierarchical Laplacian%
\[
Lf(x)=%
{\displaystyle\sum\limits_{r=1}^{\infty}}
(1-\varkappa)\varkappa^{r}\left(  f(x)-\frac{1}{m(I_{r}(x))}%
{\displaystyle\int\limits_{I_{r}(x)}}
fdm\right)  ,
\]
with $\varkappa\in]0,1[$ a fixed parameter. Writing $\varkappa=2^{-\alpha}$ we
see that $L$ coinsides with the hierarchical Laplacian introduced in
(\ref{An example}) with $\Phi(\tau)=\tau^{\alpha}.$ In particular, all
eigenvalues $\lambda(B)$ of the operator $L$ are of the form
\[
\lambda(B)=\varkappa^{r}=|B|^{-\alpha},
\]
the natural number $r\geq1$ is the rank of the ball $B$ and $|B|$ its cardinality.

To define the potential $V(x)$ we choose a sequence of balls $B_{0}=\{0,1\}$,
$B_{1}=\{2,3\}$, $B_{2}=\{4,5,6,7\}$,..., $B_{k}=\{2^{k},...,2^{k+1}-1\},...$
and a sequence $\sigma_{0},\sigma_{1},\sigma_{2},...,\sigma_{k},...$ of
negative reals such that $\sigma_{0}\neq\sigma_{1}$, and set%
\[
V(x)=\sum\sigma_{k}1_{B_{k}}(x).
\]

We select the folowing $H-$ invariant subspaces of $L^{2}(X,m)$

\begin{itemize}
\item $\mathcal{L}_{+}=\mathrm{span}\{1_{B_{k}}:k=0,1,2,...\}$,

\item $\mathcal{L}_{B_{k}}=\mathrm{span}\{f_{T}:T\varsubsetneq B_{k}\}$, and

\item $\mathcal{L}_{-}=L^{2}(X,m)\ominus\mathcal{L}_{+}=%
{\displaystyle\bigoplus}
\mathcal{L}_{B_{k}}$.
\end{itemize}

\textbf{5.2 Spectrum of }$\left\langle H|\mathcal{L}_{-}\right\rangle
$\textbf{ }As in the previous section we conclude that

\begin{itemize}
\item \ $\left\langle H|\mathcal{L}_{B_{k}}\right\rangle =\left\langle
L|\mathcal{L}_{B_{k}}\right\rangle +\sigma_{k}$,

\item \ $\left\langle H|\mathcal{L}_{-}\right\rangle =%
{\displaystyle\bigoplus}
(\left\langle L|\mathcal{L}_{B_{k}}\right\rangle +\sigma_{k})$, and

\item $Spec\left\langle H|\mathcal{L}_{-}\right\rangle =\overline{%
{\displaystyle\bigcup}
\{\mathfrak{S}_{k}+\sigma_{k}\}}$ where $\mathfrak{S}_{k}=Spec\left\langle
L|\mathcal{L}_{B_{k}}\right\rangle .$
\end{itemize}

Let $B_{k}^{\prime}$ be the closest neighbouring ball to $B_{k}$. Since the
operator $L$ is homogeneous, $\mathfrak{S}_{k}=Spec\left\langle L|\mathcal{L}%
_{B_{k}^{\prime}}\right\rangle $. The sequence $\{B_{k}^{\prime}\}$ monotone
increase to $X$ whence $\mathfrak{S}_{k}\uparrow Spec(L)$. In particular, the
following statement holds true

\begin{proposition}
\label{Spec_ess}If the sequence $\{\sigma_{k}\}$ forms a dense subset in some
interval $\mathcal{I}$\ then the operator\emph{ }$\left\langle H|\mathcal{L}%
_{-}\right\rangle $ has a pure point spectrum. It contains all intervals\emph{
}$\tau+\mathcal{I}$, where\emph{ }$\tau$ runs over the set $Spec(L)$\emph{.
}In particular, the set $Spec\left\langle H|\mathcal{L}_{-}\right\rangle $
consists of finite number of disjoint intervals. \emph{ }
\end{proposition}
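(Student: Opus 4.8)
The plan is to exploit the decomposition $\left\langle H|\mathcal{L}_{-}\right\rangle = \bigoplus_k (\left\langle L|\mathcal{L}_{B_k}\right\rangle + \sigma_k)$ together with the two structural facts already established just above the statement: first, that $\mathfrak{S}_k = Spec\left\langle L|\mathcal{L}_{B_k}\right\rangle = Spec\left\langle L|\mathcal{L}_{B_k'}\right\rangle$ since $L$ is homogeneous, and second, that $\{B_k'\}$ increases monotonically to $X$, so that $\mathfrak{S}_k \uparrow Spec(L)$. From the general theory of Section~2, $Spec(L) = \{0\}\cup\{\lambda_r : r\geq 1\}$ is a pure point set, and every $\mathfrak{S}_k$ is a finite or countable subset of it (the eigenvalues $\lambda(T)$ for balls $T$ with $T'\subseteq B_k'$, namely $\lambda_1,\dots,\lambda_{r_k}$ where $r_k$ is the rank of $B_k'$). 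The key point is that $\mathfrak{S}_k$ is a set of \emph{eigenvalues} of the restricted operator (each of finite multiplicity inside $\mathcal{L}_{B_k}$, but that is irrelevant for the spectrum as a set), so $Spec\left\langle H|\mathcal{L}_{B_k}\right\rangle = \mathfrak{S}_k + \sigma_k$ is again pure point.

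First I would record that a countable orthogonal sum of self-adjoint operators, each with pure point spectrum, again has pure point spectrum equal to the closure of the union of the summands' spectra; hence $\left\langle H|\mathcal{L}_{-}\right\rangle$ has pure point spectrum $\overline{\bigcup_k (\mathfrak{S}_k + \sigma_k)}$, which is the displayed formula in item~5.2. Next, using $\mathfrak{S}_k \uparrow Spec(L)$: for any fixed $\tau = \lambda_r \in Spec(L)$, there is $k_0$ with $\tau \in \mathfrak{S}_k$ for all $k \geq k_0$, so $\tau + \sigma_k \in Spec\left\langle H|\mathcal{L}_{-}\right\rangle$ for all $k\geq k_0$. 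If $\{\sigma_k\}$ is dense in the interval $\mathcal{I}$, then already the tail $\{\sigma_k : k\geq k_0\}$ is dense in $\mathcal{I}$ (density is a tail property), so $\tau + \mathcal{I} \subseteq \overline{\{\tau+\sigma_k : k\geq k_0\}} \subseteq Spec\left\langle H|\mathcal{L}_{-}\right\rangle$ by closedness of the spectrum. Taking the union over $\tau\in Spec(L)$ gives $\bigcup_{r\geq 1}(\lambda_r + \mathcal{I}) \cup \mathcal{I} \subseteq Spec\left\langle H|\mathcal{L}_{-}\right\rangle$, which is the asserted containment of the intervals $\tau+\mathcal{I}$.

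For the final sentence — that $Spec\left\langle H|\mathcal{L}_{-}\right\rangle$ consists of finitely many disjoint intervals — I would argue as follows. Since $\lambda_r = \varkappa^r \downarrow 0$, only finitely many of the translated intervals $\lambda_r + \mathcal{I}$ are pairwise disjoint: once $\varkappa^r - \varkappa^{r+1} < |\mathcal{I}|$, consecutive translates overlap, and for all large $r$ the translates $\lambda_r + \mathcal{I}$ together with $0+\mathcal{I}$ merge into a single interval. So the union $\bigcup_{r\geq 1}(\lambda_r+\mathcal{I}) \cup \mathcal{I}$ is a finite union of disjoint intervals; and since $\{\sigma_k\}\subseteq\mathcal{I}$ is bounded and $\mathfrak{S}_k\subseteq Spec(L)$, every point $\tau+\sigma_k$ lies in one of these intervals, so the closure $\overline{\bigcup_k(\mathfrak{S}_k+\sigma_k)}$ is \emph{exactly} this finite union of disjoint intervals. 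The main obstacle — really the only subtle point — is the direction $Spec\left\langle H|\mathcal{L}_{-}\right\rangle \subseteq \bigcup_r(\lambda_r+\mathcal{I})\cup\mathcal{I}$: one must check that no extra points appear, which follows because each $\mathfrak{S}_k+\sigma_k$ is contained in $(Spec(L)+\mathcal{I})$ regardless of density, and $Spec(L)+\overline{\mathcal{I}}$ is already closed (being a finite union of closed intervals after the merging), so taking closures adds nothing new. The density hypothesis is used only for the reverse containment, to fill each interval $\lambda_r+\mathcal{I}$ completely.
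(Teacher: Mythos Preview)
Your proposal is correct and follows essentially the same line as the paper: the text immediately preceding the proposition already records the direct-sum decomposition $\left\langle H|\mathcal{L}_{-}\right\rangle=\bigoplus_k(\left\langle L|\mathcal{L}_{B_k}\right\rangle+\sigma_k)$, the formula $Spec\left\langle H|\mathcal{L}_{-}\right\rangle=\overline{\bigcup_k(\mathfrak{S}_k+\sigma_k)}$, and the fact $\mathfrak{S}_k\uparrow Spec(L)$, and then states the proposition as a direct consequence without a separate proof. You have simply supplied the routine details the paper leaves implicit --- the tail-density remark, the merging of the intervals $\lambda_r+\mathcal{I}$ as $\lambda_r=\varkappa^r\downarrow 0$, and the (tacit) assumption $\{\sigma_k\}\subseteq\overline{\mathcal{I}}$ needed for the reverse inclusion --- so there is nothing to contrast.
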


\textbf{5.3 Spectrum of }$<H|\mathcal{L}_{+}>$\textbf{ \ }As $f_{B_{k}}\perp$
$\mathcal{L}_{B_{k}}$ the sequence $\{f_{B_{k}}\}\subset\mathcal{L}_{+}$. We
claim that $\{f_{B_{k}}:k\geq1\}$ is a $\emph{complete}$ $\emph{orthogonal}$
\emph{system} in the Hilbert space $\mathcal{L}_{+}$. Indeed, when $k,l\geq1$
and $k\neq l$ we have $f_{B_{k}}\perp$ $f_{B_{l}}$ because $B_{k}^{\prime}\cap
B_{l}^{\prime}=\oslash$. Let us show that $\{f_{B_{k}}:k\geq1\}$\ is complete.
Assume that $\psi=\sum_{k\geq0}\psi_{k}1_{B_{k}}$ is orthogonal to all
$f_{B_{k}}$, $k=1,2,...$. Since $f_{B_{0}}+f_{B_{1}}=0$, \ $\psi$ is
orthogonal to $f_{B_{0}}$ as well. Thus, we obtain an infinite system of
linear equations $\sum_{l\geq0}\psi_{l}(1_{B_{l}},f_{B_{k}})=0$,
$k=0,1,2,...$, or in equivalent form,%
\begin{align*}
|B_{k}^{^{\prime}}|\psi_{k}  &  =\sum_{l}\psi_{l}(1_{B_{l}},1_{B_{k}^{\prime}%
})\\
&  =\sum_{l\leq k}\psi_{l}(1_{B_{l}},1_{B_{k}^{\prime}})=\sum_{l\leq k}%
|B_{l}|\psi_{l}.
\end{align*}
Setting $\xi_{l}=|B_{l}|\psi_{l}$, we obtain an infinite system of linear
equations $2\xi_{k}=\sum_{l\leq k}\xi_{l}$ which has unique solution $\xi
_{k}=0$, $k=0,1,2,...$, as claimed.

Thus, the system of functions $F_{k}=\sqrt{2|B_{k}|}f_{B_{k}},$ $k\geq1,$ is
an orthnormal basis in $\mathcal{L}_{+}$. The matrix $M_{L}$ of the operator
$<L|\mathcal{L}_{+}>$ in the basis $\{F_{k}\}$ is diagonal $M_{L}%
=\mathsf{diag}\{\varkappa^{2},\varkappa^{3},...\}.$ In particular, $L$ belongs
to the trace class, therefore by \cite[Theorem IV.5.35 and Theorem
X.4.4]{Kato},
\[
Spec_{ess}<H|\mathcal{L}_{+}>=Spec_{ess}<V|\mathcal{L}_{+}>
\]
and%
\[
Spec_{ac}<H|\mathcal{L}_{+}>=Spec_{ac}<V|\mathcal{L}_{+}>.
\]

The system of functions $E_{k+1}=|B_{k}|^{-1/2}I_{B_{k}},k\geq0,$ is an
orthnormal basis in $\mathcal{L}_{+}$. The matrix $M_{V}$ of the operator
$<V|\mathcal{L}_{+}>$ in the basis $\{E_{k}\}$ is diagonal $M_{V}%
=\mathsf{diag}\{\sigma_{0},\sigma_{1},\sigma_{2},\sigma_{3},...\}.$ In
particular, we conclude that:

\begin{itemize}
\item $Spec<V|\mathcal{L}_{+}>=\overline{\{\sigma_{n}:n=0,1,...\}},$

\item $Spec_{ac}<V|\mathcal{L}_{+}>$ and $Spec_{sc}<V|\mathcal{L}_{+}>$ are
empty sets,

\item $Spec_{d}<V|\mathcal{L}_{+}>$ consists of isolated $\sigma_{k}$ having
finite multiplicity,

\item $Spec_{ess}<V|\mathcal{L}_{+}>$ consists of isolated $\sigma_{k}$ having
infinite multiplicity and of occumulating points of the sequence $\{\sigma
_{k}\}$.
\end{itemize}

Summing up all fact from above we obtain

\begin{proposition}
\label{Spec_ess+}The set $Spec_{ess}<H|\mathcal{L}_{+}>$ consists of isolated
$\sigma_{k}$ having infinite multiplicity and of occumulating points of the
sequence $\{\sigma_{k}\}$, its subset $Spec_{ac}<H|\mathcal{L}_{+}%
>=\varnothing$. In particular, if the sequence $\{\sigma_{k}\}$ forms a dense
subset in some interval $\mathcal{I}$\ then $Spec_{ess}\left\langle
H|\mathcal{L}_{+}\right\rangle $\emph{= }$\mathcal{I}$\emph{.}
\end{proposition}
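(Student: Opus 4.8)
The plan is to assemble the statement from the two spectral identities that have already been derived just above, namely $Spec_{ess}\langle H|\mathcal{L}_{+}\rangle = Spec_{ess}\langle V|\mathcal{L}_{+}\rangle$ and $Spec_{ac}\langle H|\mathcal{L}_{+}\rangle = Spec_{ac}\langle V|\mathcal{L}_{+}\rangle$. These came from the observation that in the basis $\{F_k\}$ the operator $\langle L|\mathcal{L}_{+}\rangle$ is $\mathrm{diag}\{\varkappa^{2},\varkappa^{3},\dots\}$, hence trace class (as $\sum_{k\ge1}\varkappa^{k}<\infty$) and in particular relatively compact with respect to the bounded self-adjoint operator $\langle V|\mathcal{L}_{+}\rangle$; then \cite[Theorem IV.5.35]{Kato} handles the essential spectrum and \cite[Theorem X.4.4]{Kato} the absolutely continuous one. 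So the first task is simply to feed into these identities the spectral description of the diagonal operator $\langle V|\mathcal{L}_{+}\rangle = \mathrm{diag}\{\sigma_0,\sigma_1,\sigma_2,\dots\}$ recorded in the bullet list preceding the statement: its absolutely continuous spectrum is empty, and its essential spectrum consists exactly of the isolated $\sigma_k$ of infinite multiplicity together with the accumulation points of the sequence $(\sigma_k)$. Substituting these two facts yields the first two assertions of the proposition verbatim.

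For the ``in particular'' clause I would argue as follows. Suppose $\{\sigma_k : k\ge 0\}$ is dense in a non-degenerate interval $\mathcal{I}$. Since a set dense in a non-degenerate interval has no isolated points, no $\sigma_k$ is an isolated point of $\{\sigma_j\}$, and every point of $\mathcal{I}$ is an accumulation point of $(\sigma_k)$; hence $\mathcal{I}\subseteq Spec_{ess}\langle V|\mathcal{L}_{+}\rangle$. Conversely $Spec_{ess}\langle V|\mathcal{L}_{+}\rangle \subseteq Spec\langle V|\mathcal{L}_{+}\rangle = \overline{\{\sigma_k\}} = \mathcal{I}$ (reading $\mathcal{I}$ as closed, or replacing it by its closure). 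Combining these inclusions with the identity $Spec_{ess}\langle H|\mathcal{L}_{+}\rangle = Spec_{ess}\langle V|\mathcal{L}_{+}\rangle$ gives $Spec_{ess}\langle H|\mathcal{L}_{+}\rangle = \mathcal{I}$.

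I do not expect a real obstacle: every ingredient is either an invocation of a theorem of Kato already cited in the text or an elementary spectral computation for a diagonal operator already carried out above. The only points requiring a moment's care are verifying that $\langle L|\mathcal{L}_{+}\rangle$ genuinely lies in the trace class (immediate from $\sum\varkappa^{k}<\infty$), that this licenses both the Weyl-type invariance of $Spec_{ess}$ and the Kato--Rosenblum invariance of $Spec_{ac}$, and, in the last clause, the small topological remark that a subset dense in a non-degenerate interval possesses no isolated points.
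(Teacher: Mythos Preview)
Your proposal is correct and follows exactly the paper's own route: the proposition is stated in the text as a direct summary (``Summing up all facts from above we obtain\ldots'') of the preceding discussion, which establishes that $\langle L|\mathcal{L}_{+}\rangle=\mathrm{diag}\{\varkappa^{2},\varkappa^{3},\dots\}$ is trace class, invokes the two Kato theorems to transfer $Spec_{ess}$ and $Spec_{ac}$ from $\langle H|\mathcal{L}_{+}\rangle$ to $\langle V|\mathcal{L}_{+}\rangle$, and then reads off the spectral data of the diagonal operator $\langle V|\mathcal{L}_{+}\rangle$ from the bullet list. Your handling of the ``in particular'' clause simply spells out the elementary topological step the paper leaves implicit.
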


\textbf{5.4 Generalized eigenfunctions }Thus, we are left to find
$Spec_{pp}\left\langle H|\mathcal{L}_{+}\right\rangle $ and its subset
$Spec_{d}\left\langle H|\mathcal{L}_{+}\right\rangle $. Let us consider the
equation $H\psi=\lambda\psi$. We are looking for a bounded solution $\psi$ of
the form $\psi=\sum_{k}\psi_{k}1_{B_{k}}$ which satisfies the equation
$H\psi=\lambda\psi$ in a weak sense, that is,
\[
(H\phi-\lambda\phi,\psi)=0\text{ or }(L\phi,\psi)=((\lambda-V)\phi,\psi),
\]
for any test function $\phi\in\mathcal{D}$ of the form $\phi=\sum_{k}\phi
_{k}1_{B_{k}}$.

Let us choose $\phi=1_{B_{n}}$. As $V=\sum\sigma_{k}1_{B_{k}}$ and
$Lf_{B}=\lambda(B^{\prime})f_{B}$, we obtain:
\begin{align}
(\lambda-\sigma_{n})\psi_{n}  &  =(\psi,L(1_{B_{n}}/\left\vert B_{n}%
\right\vert ))=(\psi,L(f_{B_{n}}+f_{B_{n}^{\prime}}+f_{B_{n}^{\prime\prime}%
}+...))\label{theta eq}\\
&  =(\psi,\lambda_{n+1}f_{B_{n}}+\lambda_{n+2}f_{B_{n}^{\prime}}+\lambda
_{n+3}f_{B_{n}^{\prime\prime}}+...).\nonumber
\end{align}
Since $f_{B_{n}^{\prime}}=$ $-f_{B_{n+1}}$, $f_{B_{n}^{\prime\prime}%
}=-f_{B_{n+2}}$, etc equation (\ref{theta eq}) gives for $n\geq1$,
\begin{equation}
(\lambda-\sigma_{n})\psi_{n}=\lambda_{n+1}(\psi,f_{B_{n}})-\lambda_{n+2}%
(\psi,f_{B_{n+1}})-\lambda_{n+3}(\psi,f_{B_{n+2}})+..., \label{psi-equation}%
\end{equation}
and
\[
(\lambda-\sigma_{0})\psi_{0}=-\lambda_{2}(\psi,f_{B_{1}})-\lambda_{3}%
(\psi,f_{B_{2}})-\lambda_{4}(\psi,f_{B_{3}})-...\text{ .}%
\]
It follows that for $n\geq1$,
\begin{equation}
(\lambda-\sigma_{n})\psi_{n}-(\lambda-\sigma_{n+1})\psi_{n+1}=\lambda
_{n+1}(\psi,f_{B_{n}})-2\lambda_{n+2}(\psi,f_{B_{n+1}}),\text{ \ \ }
\label{sigma-psi eq}%
\end{equation}
and
\begin{equation}
(\lambda-\sigma_{0})\psi_{0}-(\lambda-\sigma_{1})\psi_{1}=-2\lambda_{2}%
(\psi,f_{B_{1}}). \label{initial cond_1}%
\end{equation}
Next we compute $\lambda_{n+1}(\psi,f_{B_{n}}):$
\begin{align*}
\lambda_{2}(\psi,f_{B_{1}})  &  =\frac{\lambda_{2}}{|B_{1}|}(\psi,1_{B_{1}%
})-\frac{\lambda_{2}}{|B_{1}^{\prime}|}(\psi,1_{B_{1}^{\prime}})\\
&  =\frac{1}{2}\lambda_{2}\left(  \psi_{1}-\psi_{0}\right)  ,
\end{align*}%
\begin{align*}
\lambda_{3}(\psi,f_{B_{2}})  &  =\frac{\lambda_{3}}{|B_{2}|}(\psi,1_{B_{2}%
})-\frac{\lambda_{3}}{|B_{2}^{\prime}|}(\psi,1_{B_{2}^{\prime}})\\
&  =\frac{1}{2}\lambda_{3}\left(  \psi_{2}-\frac{1}{2}(\psi_{1}+\psi
_{0})\right)
\end{align*}
and, for $n\geq3,$%
\begin{align*}
\lambda_{n+1}(\psi,f_{B_{n}})  &  =\frac{\lambda_{n+1}}{|B_{n}|}(\psi
,1_{B_{n}})-\frac{\lambda_{n+1}}{|B_{n}^{\prime}|}(\psi,1_{B_{n}^{\prime}})\\
&  =\frac{1}{2}\lambda_{n+1}\left(  \psi_{n}-\frac{1}{2}\psi_{n-1}%
-...-\frac{1}{2^{n-2}}\psi_{2}-\frac{1}{2^{n-1}}(\psi_{1}+\psi_{0})\right)  .
\end{align*}
Equations (\ref{initial cond_1}), (\ref{sigma-psi eq}) and computations from
above yield
\begin{equation}
\left(  \lambda-\sigma_{0}-\lambda_{2}\right)  \psi_{0}=\left(  \lambda
-\sigma_{1}-\lambda_{2}\right)  \psi_{1}, \label{initial condition}%
\end{equation}%
\begin{align*}
&  \lambda_{2}(\psi,f_{B_{1}})-2\lambda_{3}(\psi,f_{B_{2}})\\
&  =\lambda_{2}\left(  -\varkappa\psi_{2}+\frac{1}{2}(1+\varkappa)\psi
_{1}-\frac{1}{2}(1-\varkappa)\psi_{0}\right)  ,
\end{align*}
and, for $n\geq2,$%
\begin{align}
&  \lambda_{n+1}(\psi,f_{B_{n}})-2\lambda_{n+2}(\psi,f_{B_{n+1}}%
)\label{lambda-kappa}\\
&  =\lambda_{n+1}\left(  -\varkappa\psi_{n+1}+\frac{1}{2}(1+\varkappa)\psi
_{n}-\frac{1}{2^{2}}(1-\varkappa)\psi_{n-1}-...\right. \nonumber\\
&  \left.  -\frac{1}{2^{n}}(1-\varkappa)(\psi_{1}+\psi_{0})\right)  .\nonumber
\end{align}
Thus, applying equations (\ref{sigma-psi eq}) and (\ref{lambda-kappa}) we
obtain
\begin{align*}
&  \frac{-\varkappa\lambda_{n+1}+(\lambda-\sigma_{n+1})}{\lambda_{n+1}}%
\psi_{n+1}+\frac{-2(\lambda-\sigma_{n})+(1+\varkappa)\lambda_{n+1}}%
{2\lambda_{n+1}}\psi_{n}\\
&  =\frac{(1-\varkappa)}{2^{2}}\psi_{n-1}+\left(  \frac{(1-\varkappa)}{2^{3}%
}\psi_{n-2}+...+\frac{(1-\varkappa)}{2^{n}}(\psi_{1}+\psi_{0})\right)
\end{align*}
and%
\begin{align*}
&  \frac{-\varkappa\lambda_{n}+(\lambda-\sigma_{n})}{\lambda_{n}}\psi
_{n}+\frac{-2(\lambda-\sigma_{n-1})+(1+\varkappa)\lambda_{n}}{2\lambda_{n}%
}\psi_{n-1}\\
&  =\frac{(1-\varkappa)}{2^{2}}\psi_{n-2}+\frac{(1-\varkappa)}{2^{3}}%
\psi_{n-3}+...+\frac{(1-\varkappa)}{2^{n-1}}(\psi_{1}+\psi_{0}).
\end{align*}
Let us define variables

\begin{itemize}
\item $A_{n}=-(\lambda-\sigma_{n})+\varkappa^{n+1}$, $n\geq1,$\ 

\item $B_{n}=(1+\frac{\varkappa}{2})(\lambda-\sigma_{n})-(\frac{1}%
{2}+\varkappa)\varkappa^{n+1},$ $n\geq1,$ and

$B_{1}=(\lambda-\sigma_{1})-\frac{\varkappa^{2}}{2}(1+\varkappa)$,
$\ B_{0}=(\lambda-\sigma_{0})-\varkappa^{2}$, $\ $

\item $C_{n}=-\frac{\varkappa}{2}(\lambda-\sigma_{n})+\frac{1}{2}%
\varkappa^{n+2},$ $n\geq1,$and $C_{0}=-\frac{\varkappa^{2}}{2}(1+\varkappa),$
$C_{-1}=0.$
\end{itemize}

The computations from above show that the sequence $\{\psi_{n}\}$ satisfies
the following homogeneous second order difference equation
\[
A_{n+1}\psi_{n+1}+B_{n}\psi_{n}+C_{n-1}\psi_{n-1}=0\text{,\ }%
\]
or equivalently, setting $(\lambda-\sigma_{n})\psi_{n}=\theta_{n}$, we get
\begin{equation}
\theta_{n+1}=D_{n}\theta_{n}+E_{n-1}\theta_{n-1}\text{ \ \ and \ \ }\theta
_{1}=D_{0}\theta_{0}.\text{ } \label{recursion}%
\end{equation}
The coefficients $D_{n}$ and $E_{n-1}$ satisfy
\[
D_{0}=1+O(1)\varkappa^{2},\text{ \ }D_{n}=1+\frac{\varkappa}{2}+O(1)\varkappa
^{n},\text{ \ }E_{n-1}=-\frac{\varkappa}{2}+O(1)\varkappa^{n},
\]
whenever the following condition holds%
\begin{equation}
\lambda\notin\overline{\{\sigma_{n}:n=0,1,...\}}. \label{lambda-condition}%
\end{equation}
If this is the case, by the asymptotic theory of linear second order
difference equations, see \cite[Theorem 1.5]{JanasMoszynski} and \cite[Theorem
8.25 and Corollary 8.27]{Elaydi}, there exist two fundamenthal solutions
$\theta_{1,n}$ and $\theta_{2,n}$ of the equation%
\[
\theta_{n+1}=D_{n}\theta_{n}+E_{n}\theta_{n-1}%
\]
such that asymptotically as $n\rightarrow\infty,$%
\[
\theta_{1,n}=[1+o(1)]\text{ \ and \ }\theta_{1,n}=\left(  \frac{\varkappa}%
{2}\right)  ^{n}[1+o(1)].
\]
Thus, general solution $\theta_{n}$\ of the equation (\ref{recursion})
asymptotically can be written in the form%
\begin{equation}
\theta_{n}=\left(  C_{1}+C_{2}\left(  \frac{\varkappa}{2}\right)  ^{n}\right)
[1+o(1)], \label{general theta}%
\end{equation}
where the constants $C_{1}$ and $C_{2}$ depend on $\theta_{0}$ (remember,
$\theta_{1}=D_{0}\theta_{0}$).

On the other hand, by (\ref{psi-equation}), we get%
\begin{align}
\left\vert \theta_{n}\right\vert  &  \leq\lambda_{n+1}\left\vert
(\psi,f_{B_{n}})\right\vert +\lambda_{n+2}\left\vert (\psi,f_{B_{n+1}%
})\right\vert +...\label{theta ineq}\\
&  \leq\lambda_{n+1}\left\Vert \psi\right\Vert _{L^{\infty}}\left\Vert
f_{B_{n}}\right\Vert _{L^{1}}+\lambda_{n+2}\left\Vert \psi\right\Vert
_{L^{\infty}}\left\Vert f_{B_{n+1}}\right\Vert _{L^{1}}+...\nonumber\\
&  =\left\Vert \psi\right\Vert _{L^{\infty}}\left(  \varkappa^{n+1}%
+\varkappa^{n+2}+...\right)  =O(\varkappa^{n}),\nonumber
\end{align}
in particular, the sequence $\theta_{n}$ tends to zero. Thus, comparing
equations (\ref{general theta}) and (\ref{theta ineq}) we conclude that under
condition (\ref{lambda-condition}),%
\begin{equation}
\theta_{n}=C_{2}\left(  \frac{\varkappa}{2}\right)  ^{n}[1+o(1)],
\label{theta-n}%
\end{equation}
or equivalently,%
\begin{equation}
\psi_{n}=C_{3}\left(  \frac{\varkappa}{2}\right)  ^{n}[1+o(1)], \label{psi-n}%
\end{equation}
for some constant $C_{3}$ which depends on the distance of $\lambda$ to the
set $\overline{\{\sigma_{n}\}}.$

\begin{proposition}
\label{Spectrum-H-plus}$Spec<H|\mathcal{L}_{+}>$ $\subset\overline
{\{\sigma_{n}\}}.$ In particular, if the sequence $\{\sigma_{k}\}$ forms a
dense subset in some interval $\mathcal{I}$,\ then
\[
Spec<H|\mathcal{L}_{+}>=Spec_{ess}<H|\mathcal{L}_{+}>=\mathcal{I}.
\]

\end{proposition}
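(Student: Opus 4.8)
The claim is that $Spec\langle H|\mathcal{L}_{+}\rangle \subset \overline{\{\sigma_n\}}$, together with the conclusion in the dense case. The strategy is a proof by contradiction using the generalized-eigenfunction analysis already carried out in Section 5.4. Suppose $\lambda \in Spec\langle H|\mathcal{L}_{+}\rangle$ but $\lambda \notin \overline{\{\sigma_n\}}$, i.e. condition (\ref{lambda-condition}) holds. Then $\lambda$ is in the spectrum of a self-adjoint operator, so by the Weyl criterion there exists a sequence of approximate eigenfunctions, or — since we want to extract an honest obstruction — we argue that any weak solution $\psi = \sum_k \psi_k 1_{B_k}$ of $H\psi = \lambda\psi$ that is bounded (equivalently, lies in $\ell^\infty$ with respect to the $1_{B_k}$ coefficients) must in fact decay like $(\varkappa/2)^n$ by (\ref{psi-n}), hence lies in $\mathcal{L}_{+} \cap L^2$ and would be a genuine $L^2$-eigenfunction.

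The first step is to pin down the dichotomy: for $\lambda$ satisfying (\ref{lambda-condition}), the recursion (\ref{recursion}) with the stated asymptotics for $D_n, E_{n-1}$ has a two-dimensional solution space spanned by one solution behaving like $1+o(1)$ and one behaving like $(\varkappa/2)^n[1+o(1)]$; this is exactly (\ref{general theta}), quoted from the discrete Levinson / Poincaré–Perron theory (\cite[Theorem 1.5]{JanasMoszynski}, \cite[Theorem 8.25, Corollary 8.27]{Elaydi}). The second step is the a priori bound (\ref{theta ineq}): any bounded weak solution $\psi$ automatically forces $\theta_n = O(\varkappa^n) \to 0$, which kills the non-decaying branch and leaves (\ref{theta-n})–(\ref{psi-n}). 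The third step is to turn this into a statement about the spectrum: since $\sum_n |B_n| \psi_n^2 \asymp \sum_n 2^n (\varkappa/2)^{2n} = \sum_n (\varkappa^2/2)^n < \infty$ (using $\varkappa^2 < 1 < 2$, so $\varkappa^2/2 < 1$), the candidate eigenfunction is genuinely in $L^2$; but then $\lambda$ would be an eigenvalue, and one checks that the boundary/normalization constraint (\ref{initial condition}) linking $\psi_0$ and $\psi_1$ together with $\theta_1 = D_0\theta_0$ generically forces $C_2 = 0$ (hence $\psi \equiv 0$) unless $\lambda$ coincides with one of the $\sigma_n$ — the point being that an $L^2$ solution is uniquely determined up to scalar by the recursion, and the only freedom is incompatible with the decay unless $\lambda \in \overline{\{\sigma_n\}}$. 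More cleanly: the self-adjoint operator $\langle H|\mathcal{L}_{+}\rangle$ restricted away from $\overline{\{\sigma_n\}}$ has, by the above, no approximate eigensequence — any Weyl sequence would have coefficients satisfying (\ref{theta ineq}) uniformly, forcing it into the finite-dimensional decaying cone, which cannot contain an orthonormal sequence. This contradiction gives $Spec\langle H|\mathcal{L}_{+}\rangle \subset \overline{\{\sigma_n\}}$.

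For the second assertion, suppose $\{\sigma_k\}$ is dense in an interval $\mathcal{I}$. Then $\overline{\{\sigma_n\}} \supseteq \mathcal{I}$, and by Proposition \ref{Spec_ess+} we already have $Spec_{ess}\langle H|\mathcal{L}_{+}\rangle = \mathcal{I}$; combined with the inclusion just proved, $\mathcal{I} = Spec_{ess}\langle H|\mathcal{L}_{+}\rangle \subseteq Spec\langle H|\mathcal{L}_{+}\rangle \subseteq \overline{\{\sigma_n\}} = \mathcal{I}$ (the last equality because the closure of a dense subset of an interval is that interval, assuming $\{\sigma_n\} \subseteq \mathcal{I}$), forcing equality throughout.

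The main obstacle I expect is the passage from "every bounded weak solution decays" to "$\lambda$ is not in the spectrum." The asymptotic analysis of (\ref{recursion}) gives pointwise decay of any \emph{fixed} solution, but spectral membership is about \emph{approximate} solutions (Weyl sequences), and one must either (a) show the bound (\ref{theta ineq}) is uniform along a Weyl sequence with the implied constant controlled by $\mathrm{dist}(\lambda, \overline{\{\sigma_n\}})$ — which is plausible from the computation of $C_3$ in (\ref{psi-n}) but needs the error terms $o(1)$ in the difference-equation asymptotics to be uniform in $n$ for $\lambda$ in a compact set bounded away from $\overline{\{\sigma_n\}}$ — or (b) invoke a subordinacy-type criterion (Gilbert–Pearson) adapted to this block-Jacobi setting, concluding that since every solution at energy $\lambda$ is subordinate (indeed $L^2$), $\lambda$ cannot support any continuous spectrum, and the point spectrum is excluded by the normalization incompatibility. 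Option (a) is the most direct given what is already set up in the text; the delicate point is the uniformity of the $o(1)$ terms, which follows from the fact that the perturbations $O(1)\varkappa^n$ of the constant-coefficient recursion are summable with a rate independent of $\lambda$.
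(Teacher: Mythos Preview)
Your proposal correctly reproduces the setup through (\ref{psi-n}): for $\lambda \notin \overline{\{\sigma_n\}}$, any bounded weak solution has $\psi_n \sim C_3(\varkappa/2)^n$ and hence lies in $L^2$. You also correctly identify the remaining obstacle --- passing from ``every bounded solution is $L^2$'' to ``$\lambda \notin Spec$'' --- but none of your proposed resolutions actually closes the gap. The claim that the initial condition $\theta_1 = D_0\theta_0$ together with (\ref{initial condition}) ``generically forces $C_2 = 0$'' is unjustified: that condition merely selects a one-dimensional family of solutions to the second-order recursion, and you give no reason why this family should be transverse to the decaying branch for \emph{every} such $\lambda$ (``generic'' is not enough). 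Your options (a) and (b) --- uniform Weyl-sequence control and a subordinacy criterion --- are plausible directions but are not carried out, and each would require substantial additional machinery not present in the text.

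The paper's argument is a bootstrap that you do not mention. Having established $\psi_n = O((\varkappa/2)^n)$, one picks $\varepsilon > 0$ small enough that $2^\varepsilon \varkappa^{1-\varepsilon} < 1$ (equivalently $\varepsilon/(1-\varepsilon) < \log_2(1/\varkappa)$), so that $|\psi|^{1-\varepsilon} \in L^1(X,m)$ via
\[
\bigl\| |\psi|^{1-\varepsilon}\bigr\|_{L^1} \leq C\sum_n |B_n|\,|\psi_n|^{1-\varepsilon} \leq C'\sum_n (2^\varepsilon\varkappa^{1-\varepsilon})^n < \infty.
\]
One then re-estimates $|\theta_n|$ directly from (\ref{psi-equation}) using the H\"older-type splitting
\[
|(\psi, f_{B_k})| \leq \bigl\||\psi|^{1-\varepsilon}\bigr\|_{L^1}\,\bigl\||\psi|^\varepsilon f_{B_k}\bigr\|_{L^\infty},
\]
which the paper computes to give $|\theta_n| = O\bigl((\varkappa/2)^{n(1+\varepsilon)}\bigr)$. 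This is strictly faster than the $(\varkappa/2)^n$ rate of the subordinate fundamental solution in the dichotomy (\ref{general theta}), forcing $C_1 = C_2 = 0$ and hence $\theta \equiv 0$, i.e.\ $\psi \equiv 0$ --- the desired contradiction. The self-improvement of the decay rate, not a boundary-condition count or a Weyl-sequence compactness argument, is the missing idea.
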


\begin{proof}
Let us fix $\lambda\notin\mathcal{I}.$ By Proposition \ref{Spec_ess+} it is
enought to show that $\lambda\notin Spec<H|\mathcal{L}_{+}>$. Assume that in
contrary the corresponding generalized $\lambda$-eigenfunction $\psi$ is not
eidentically zero. We already know, see equation (\ref{psi-n}), that $\psi\in
L^{2}(X,m)$. We claim that for any\emph{ }$0<\varkappa<1$ there exists\emph{
}$0<\varepsilon_{\ast}<1$ such that for all\emph{ }$0<\varepsilon
<\varepsilon_{\ast}$ the function\emph{ }$\left\vert \psi\right\vert
^{1-\varepsilon}$ belongs to\emph{ }$L^{1}(X,m)$. Indeed, let us choose
$0<\varepsilon<1$ such that%
\[
\frac{\varepsilon}{1-\varepsilon}<\log_{2}\frac{1}{\varkappa}.
\]
Then, by our choice, $2^{\varepsilon}\varkappa^{1-\varepsilon}<1$. Thus
applying equation (\ref{psi-n}) we get%
\[
\left\Vert \left\vert \psi\right\vert ^{1-\varepsilon}\right\Vert _{L^{1}}\leq
C_{4}\sum2^{n}\left(  \frac{\varkappa}{2}\right)  ^{n(1-\varepsilon)}%
=C_{4}\sum\left(  2^{\varepsilon}\varkappa^{1-\varepsilon}\right)  ^{n}<\infty
\]
as claimed. Next we apply equation (\ref{psi-equation}),%
\begin{align*}
\left\vert \theta_{n}\right\vert  &  \leq\varkappa^{n+1}\left[  \left\vert
(\psi,f_{B_{n}})\right\vert +\varkappa\left\vert (\psi,f_{B_{n}})\right\vert
+...\right] \\
&  \leq\varkappa^{n+1}\left\Vert \left\vert \psi\right\vert ^{1-\varepsilon
}\right\Vert _{L^{1}}\left[  \left\Vert \left\vert \psi\right\vert
^{\varepsilon}f_{B_{n}}\right\Vert _{L^{\infty}}+\varkappa\left\Vert
\left\vert \psi\right\vert ^{\varepsilon}f_{B_{n+1}}\right\Vert _{L^{\infty}%
}+...\right] \\
&  \leq C_{5}\varkappa^{n}\left[  \left(  \frac{\varkappa}{2}\right)
^{n\varepsilon}\frac{1}{2^{n+1}}+\varkappa\left(  \frac{\varkappa}{2}\right)
^{(n+1)\varepsilon}\frac{1}{2^{n+2}}+...\right] \\
&  =C_{6}\left(  \frac{\varkappa}{2}\right)  ^{n}\left(  \frac{\varkappa}%
{2}\right)  ^{n\varepsilon}\left[  1+\frac{\varkappa}{2}+\left(
\frac{\varkappa}{2}\right)  ^{2}+...\right]  \leq C_{7}\left(  \frac
{\varkappa}{2}\right)  ^{(n+1)\varepsilon}.
\end{align*}
This evidently contradicts to (\ref{psi-n}). Thus, $\lambda\notin
Spec<H|\mathcal{L}_{+}>$. The proof is finished.
\end{proof}

\begin{proposition}
\label{Spec_pp+} Assume that $\{\sigma_{n}\}$\ is a bounded sequence. The set
$Spec_{pp}\left\langle H|\mathcal{L}_{+}\right\rangle $, pure point part of
the spectrum of $\left\langle H|\mathcal{L}_{+}\right\rangle $, is an empty
set (Thus, by Propositions \ref{Spec_ess+} and \ref{Spectrum-H-plus},
$Spec\left\langle H|\mathcal{L}_{+}\right\rangle $ coinsides with its singular
continuous part $Spec_{sc}\left\langle H|\mathcal{L}_{+}\right\rangle $).
\end{proposition}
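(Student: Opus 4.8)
The plan is to prove that $\langle H|\mathcal{L}_{+}\rangle$ has no eigenvalue; since $Spec_{ac}\langle H|\mathcal{L}_{+}\rangle=\varnothing$ by Proposition \ref{Spec_ess+}, this gives $Spec\langle H|\mathcal{L}_{+}\rangle=Spec_{sc}\langle H|\mathcal{L}_{+}\rangle$ and proves the proposition. I would argue by contradiction: assume $\langle H|\mathcal{L}_{+}\rangle\psi=\lambda\psi$ with $0\neq\psi=\sum_{k\geq0}\psi_{k}1_{B_{k}}\in L^{2}(X,m)$. Since $\|\psi\|_{L^{2}}^{2}=\sum_{k}|\psi_{k}|^{2}|B_{k}|$ and $|B_{k}|=2^{k}\to\infty$, one has $|\psi_{k}|=o(|B_{k}|^{-1/2})$; in particular $\psi\in L^{\infty}$. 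By Proposition \ref{Spectrum-H-plus} every point of $Spec\langle H|\mathcal{L}_{+}\rangle$ lies in $\overline{\{\sigma_{n}\}}$, so $\lambda\in\overline{\{\sigma_{n}\}}$ --- precisely the regime not covered by the proof of that proposition, which disposes of $\lambda\notin\overline{\{\sigma_{n}\}}$.

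First I would record the a priori decay of $\theta_{n}:=(\lambda-\sigma_{n})\psi_{n}$. Testing $H\psi=\lambda\psi$ against $\phi=1_{B_{n}}$ and using $Lf_{B}=\lambda(B')f_{B}$ exactly as in the derivation of (\ref{psi-equation}) --- a computation using only $H\psi=\lambda\psi$, valid for every $\lambda$ --- one gets $\theta_{n}=\varkappa^{n+1}(\psi,f_{B_{n}})-\sum_{k>n}\varkappa^{k+1}(\psi,f_{B_{k}})$, whence, since $\|f_{B_{k}}\|_{L^{1}}=1$,
\[
|\theta_{n}|\leq\|\psi\|_{L^{\infty}}\sum_{k\geq n}\varkappa^{k+1}=O(\varkappa^{n}),\qquad\theta_{n}\to0.
\]
Moreover, regrouping the second order difference equation $A_{n+1}\psi_{n+1}+B_{n}\psi_{n}+C_{n-1}\psi_{n-1}=0$ obtained above --- collecting the $(\lambda-\sigma_{n})\psi_{n}$ contributions apart from the $\varkappa^{n}\psi_{n}$ ones --- shows that $\{\theta_{n}\}$ solves $\theta_{n+1}=(1+\tfrac{\varkappa}{2})\theta_{n}-\tfrac{\varkappa}{2}\theta_{n-1}+\varkappa^{n+1}\rho_{n}$ with $|\rho_{n}|\leq C\|\psi\|_{L^{\infty}}$; this holds for every $\lambda$, and the constant coefficient part has characteristic roots $1$ and $\varkappa/2$.

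Next, in the regular case where $|\lambda-\sigma_{n}|\geq\delta>0$ for all large $n$, I would mirror the proof of Proposition \ref{Spectrum-H-plus}. For large $n$ the $\theta$-recursion takes the form $\theta_{n+1}=D_{n}\theta_{n}+E_{n-1}\theta_{n-1}$ with $D_{n}=1+\tfrac{\varkappa}{2}+O(\varkappa^{n})$, $E_{n-1}=-\tfrac{\varkappa}{2}+O(\varkappa^{n})$, a summable perturbation of the constant coefficient equation; by the asymptotic theory of second order difference equations (\cite[Theorem 1.5]{JanasMoszynski}, \cite[Theorem 8.25 and Corollary 8.27]{Elaydi}) together with $\theta_{n}\to0$ one gets $\theta_{n}=C_{2}(\varkappa/2)^{n}[1+o(1)]$, with $C_{2}\neq0$ unless $\theta\equiv0$ (a non-trivial solution tending to $0$ is, up to lower order, a nonzero multiple of the decaying mode $(\varkappa/2)^{n}$). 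Since $|\lambda-\sigma_{n}|$ is bounded above and below, $\psi_{n}=\theta_{n}/(\lambda-\sigma_{n})$ then satisfies $c'(\varkappa/2)^{n}\leq|\psi_{n}|\leq c''(\varkappa/2)^{n}$ for large $n$, hence $|\psi|^{1-\varepsilon}\in L^{1}(X,m)$ for small $\varepsilon>0$ (because $\sum2^{n}(\varkappa/2)^{n(1-\varepsilon)}=\sum(2^{\varepsilon}\varkappa^{1-\varepsilon})^{n}<\infty$). Feeding this back into the expression for $\theta_{n}$ by H\"{o}lder --- using $|f_{B_{k}}|=2^{-k-1}1_{B_{k}'}$ and $\psi f_{B_{k}}=|\psi|^{1-\varepsilon}\cdot\mathrm{sgn}(\psi)\,|\psi|^{\varepsilon}f_{B_{k}}$, as in the proof of Proposition \ref{Spectrum-H-plus} --- improves the estimate to $|\theta_{n}|\leq C(\varkappa/2)^{n}(\varkappa/2)^{n\varepsilon}$, contradicting $|\theta_{n}|\geq c'(\varkappa/2)^{n}$. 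Thus $\psi\equiv0$; the finitely many indices with $\sigma_{n}=\lambda$ contribute only a finite rank adjustment and are disposed of directly from the recursion and the initial condition (\ref{initial condition}).

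The remaining case --- and the real obstacle --- is that $\lambda$ is a limit point of the sequence $\{\sigma_{n}\}$, i.e. $\sigma_{n_{j}}\to\lambda$ along some $n_{j}\to\infty$ (this includes $\lambda$ attained infinitely often). There the coefficients $D_{n},E_{n-1}$ of the $\theta$-recursion are no longer a summable perturbation of constants, and along $\{n_{j}\}$ the identity $\psi_{n}=\theta_{n}/(\lambda-\sigma_{n})$ ceases to control $\psi_{n}$ by $\theta_{n}$, so both the Levinson-type asymptotics and the $L^{1}$-bootstrap break down. I still expect $\psi\equiv0$: the idea would be to exploit the compensation between the large recursion coefficients at the near-resonant indices and the smallness, forced by $\psi\in L^{2}$, of $\theta_{n_{j}}=(\lambda-\sigma_{n_{j}})\psi_{n_{j}}$ and of $\psi_{n_{j}}$ itself ($|\psi_{n}|=o(|B_{n}|^{-1/2})$ always), in order to re-establish geometric decay of $\psi_{n}$ and then apply the bootstrap of the previous paragraph verbatim. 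Making this precise --- quantifying how fast $\sigma_{n}$ may approach $\lambda$ compatibly with $\psi\in L^{2}$ and with consistency of the difference equation, and covering the case where the near-resonant indices fail to be sparse --- is where the bulk of the work lies; this is the step I expect to be the main obstacle.
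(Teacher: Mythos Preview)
The paper contains no proof of this proposition: the statement is followed by two \verb|\bigskip|'s and then the bibliography. There is therefore nothing to compare your attempt against, and the proposition is, as far as the paper goes, stated without argument.

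Evaluating your proposal on its own merits: the reduction to absence of $L^{2}$-eigenfunctions is correct, and your treatment of the case where $|\lambda-\sigma_{n}|\geq\delta>0$ eventually is a sound extension of the argument for Proposition~\ref{Spectrum-H-plus}. But, as you yourself flag, the limit-point case is left open, and this is not a corner case: by Proposition~\ref{Spectrum-H-plus} any eigenvalue must lie in $\overline{\{\sigma_{n}\}}$, and in the motivating setting where $\{\sigma_{k}\}$ is dense in an interval $\mathcal{I}$ (Propositions~\ref{Spec_ess} and~\ref{Spec_ess+}) \emph{every} spectral point is a limit point of the $\sigma_{n}$'s. So the case you defer is exactly the case that carries the content of the proposition; what you have written does not go beyond Proposition~\ref{Spectrum-H-plus}.

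Your outline for attacking the limit-point case --- using $|\psi_{n}|=o(2^{-n/2})$ from $\psi\in L^{2}$ to control the near-resonant indices and then rerun the bootstrap --- does not obviously close. The $L^{2}$ bound gives decay of order $2^{-n/2}$, far weaker than the $(\varkappa/2)^{n}$ the bootstrap needs, and nothing prevents the near-resonant set $\{n:|\lambda-\sigma_{n}|<\varepsilon\}$ from having positive density, so that the Levinson-type asymptotics for $\theta_{n}$ genuinely fail. A different mechanism is required --- for instance, an argument at the operator level exploiting that $\langle L|\mathcal{L}_{+}\rangle$ is trace class and that the eigenprojections of $\langle V|\mathcal{L}_{+}\rangle$ are rank one, or a direct analysis of the three-term recursion at resonant indices showing that $\theta_{n_{j}}=0$ forces decay of the neighbouring $\theta$'s --- but none is supplied. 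As it stands, the proposal has a genuine gap at the decisive step.
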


\bigskip

\bigskip


\end{document}